\documentclass[12pt, a4paper]{article}
\usepackage{latexsym}
\usepackage{xy}
\usepackage{amsfonts,amsmath,amssymb,amsthm}
\usepackage{color}
%\oddsidemargin0.6cm
%\evensidemargin0.6cm
%\textwidth12.5cm \textheight19.5cm
%\renewcommand{\baselinestretch}{1}                %\baselineskip 的倍数, 两者相乘为行间距。
%\fontsize{11pt}{11pt}\selectfont
%\parskip 2pt

%\topmargin-8mm

\xyoption{all}

\newcommand{\bcen}{\begin{center}}     \newcommand{\ecen}{\end{center}}
\newcommand{\bay}{\begin{array}}      \newcommand{\eay}{\end{array}}
\newcommand{\beq}{\begin{eqnarray*}}      \newcommand{\eeq}{\end{eqnarray*}}

\def\ca{{\cal A}}
\def\cb{{\cal B}}
\def\cc{{\cal C}}
\def\cd{{\cal D}}
\def\ce{{\cal E}}
\def\cf{{\cal F}}

\def\Fun{\mathrm{Fun}}
\def\gl{\mathrm{gl.dim}}

\def\Hom{\mathrm{Hom}}

\def\mod{\mathrm{mod}}
\def\Mod{\mathrm{Mod}}

\def\op{\mathrm{op}}

\def\proj{\mathrm{proj}}

\def\rad{\mathrm{rad}}

\def\rep{\mathrm{rep}}
\def\Rep{\mathrm{Rep}}

\begin{document}

\newtheorem{theorem}{Theorem}
\newtheorem{proposition}{Proposition}
\newtheorem{lemma}{Lemma}
\newtheorem{corollary}{Corollary}
\newtheorem{remark}{Remark}
\newtheorem{example}{Example}
\newtheorem{definition}{Definition}
\newtheorem*{conjecture}{Conjecture}
\newtheorem{question}{Question}

\title{\large\bf A construction of dualizing categories by tensor products of categories }

\author{\large Yang Han and Ningmei Zhang}

\date{\footnotesize
KLMM, ISS, AMSS, Chinese Academy of Sciences, Beijing 100190, P.R.
China.\\ School of Mathematical Science, University of
Chinese Academy of Sciences. \\
E-mail: hany@iss.ac.cn}

\maketitle

\begin{abstract} It is shown that the idempotent completion of the
additive hull of the tensor product of the residue category of the
category of paths of a locally finite quiver modulo an admissible
ideal and a dualizing category is dualizing. Furthermore, the
category of finitely presented functors over such tensor product
category is dualizing and has almost split sequences. As
applications, the categories of all kinds of complexes are proved to
have almost split sequences.
\end{abstract}

\medskip

{\footnotesize {\bf Mathematics Subject Classification (2010)} :
16G70, 16G20, 18A25, 18E10}

\medskip

{\footnotesize {\bf Keywords} : dualizing category, tensor product
of categories, representation of a quiver, finitely presented
functor, almost split sequence.}

\bigskip

\section{\large Introduction}

Throughout this paper we assume that $k$ is a commutative artin ring
unless stated otherwise. Denote by $\Mod k$ the category of
$k$-modules and $\mod k$ the full subcategory of $\Mod k$ consisting
of all finitely generated $k$-modules. Let $E$ be an injective
envelope of the factor module $k/\rad k$ of $k$ modulo its radical
$\rad k$ in $\Mod k$, and $D := \Hom_k(-,E)$. A {\it dualizing
$k$-category} or {\it dualizing $k$-variety} $\ca$ is a Hom-finite
Krull-Schmidt $k$-category such that the duality $D :
\Fun_k(\ca^\op, \mod k) \rightarrow \Fun_k(\ca, \mod k)$ induces a
duality $D : \mod \ca \rightarrow \mod \ca^\op$. Dualizing
$k$-categories were introduced by Auslander and Reiten as a
generalization of artin $k$-algebras \cite{AusRei74}. It is
well-known that the existence of almost split sequences is quite
useful in the representation theory of algebras (Ref. \cite[Chapter
2]{Rei82}). A $k$-category ${\cal A}$ being dualizing ensures that
the category $ {\rm mod} {\cal A}$ of finitely presented functors in
${\rm Mod}{\cal A}$ has almost split sequences (Ref. \cite[Theorem
7.1.3]{Rei82}). From a given dualizing $k$-category $\ca$, there are
some known constructions of dualizing $k$-categories such as
$\mod\ca$ (Ref. \cite[Proposition 2.6]{AusRei74}), the functorially
finite Krull-Schmidt full $k$-subcategories of $\ca$ (Ref.
\cite[Theorem 2.3]{AusSma81} and \cite[\S 9.7 Example 5]{GabRoi92}),
the residue categories ${\cal A}/(1_A)$ of ${\cal A}$ modulo the
ideal $(1_A)$ of ${\cal A}$ generated by the identity morphism $1_A$
of an object $A$ in ${\cal A}$ (Ref. \cite[\S 9.7, Example
8]{GabRoi92}), and the category $C^b(\mod\ca)$ of bounded complexes
over $\mod\ca$ (Ref. \cite[Theorem 4.3]{BauSouZua05}).

In this paper, we will give another construction of dualizing
$k$-categories by tensor products of $k$-categories which can be
applied to construct a large number of new dualizing $k$-categories
from a given dualizing $k$-category. Let $Q$ be a locally finite
quiver, $kQ$ the $k$-category of paths of $Q$, $I$ an admissible
ideal of $kQ$ generated by a set of paths in $Q$, $\cb := kQ/I$ the
residue category of $kQ$ modulo $I$, and $\ca$ a dualizing
$k$-category. We will prove that the idempotent completion
$|\oplus(\cb \otimes_k \ca)$ of the additive hull $\oplus(\cb
\otimes_k \ca)$ of the tensor product $\cb \otimes_k \ca$ of
$k$-categories $\cb$ and $\ca$, is a dualizing $k$-category.
Furthermore, we will show that $\mod(\cb \otimes_k \ca)$ is a
dualizing $k$-category and has almost split sequences. This is a
natural generalization of \cite[Proposition 2.6]{AusRei74},
\cite[Theorem 4.3]{BauSouZua05}, and so on.

The $n$-complexes was introduced by Mayer in 1942 for setting up a
new homology theory \cite{May42, Spa49}. This generalized homology
theory was studied in
\cite{Kap96,DubKer96,Dub98,KasWam98,CibSolWis07}. The projectives
and injectives in the category of $n$-complexes were described in
\cite{Tik02,Est07}. The homotopty category and derived category of
the category of $n$-complexes were studied in
\cite{YanDin15,Gil15,BahHafNem16,IyaKatMiy13}. The $n$-complexes
were also applied to study generalized Koszul algebras
\cite{Ber01,BerDubWam03,GreMarMarZha04}. Moreover, the categories of
complexes with amplitude in an interval play important roles in the
theory of derived representation types
\cite{BauSouZua05,Bau06,Bau07,Zha16}. As one application of our
results, we will prove that the category $C^b_n(\mod\ca)$ (resp.
$C^b_n(\ca)$) of bounded $n$-complexes over $\mod\ca$ (resp. $\ca$
with $\gl(\mod\ca)<\infty$) and the category $C^m_n(\mod\ca)$ (resp.
$C^m_n(\ca)$) of $n$-complexes over $\mod\ca$ (resp. $\ca$) with
amplitude in the interval $[1,m]$ have almost split sequences. These
generalize some results in \cite{BauSouZua05}. The $n$-cyclic
complexes or $n$-cycle complexes were introduced by Peng and Xiao
(Ref. \cite[\S 7, Appendix]{PenXia97}) which were used to realize
simple Lie algebras and their quantum enveloping algebras (Ref.
\cite{PenXia97,Bri13,CheDen15}). As the other application of our
results, we will show that the category $C_{\mathbb{Z}_n}(\mod\ca)$
(resp. $C^b_{\mathbb{Z}_n}(\ca)$, $C_{\mathbb{Z}_n}(\ca)$) of
$n$-cyclic complexes (resp. bounded $n$-cyclic complexes, $n$-cyclic
complexes) over $\mod\ca$ (resp. $\ca$ with $\gl(\mod\ca)<\infty$,
$\ca$ with $\gl(\mod\ca) \leq 1$) has almost split sequences. Note
that the almost split sequences in the category
$C_{\mathbb{Z}_n}(\proj A)$ of $n$-cyclic complexes over the
category $\proj A$ of finitely generated projective modules over a
finite dimensional hereditary algebra $A$, are described in
\cite{RinZha11,CheDen15}.

\section{Preliminaries}

In this section, we will fix some notations and terminologies on all
kinds of categories and functors, and almost split sequences.

\subsection{Categories and functors}

A category $\cc$ is said to be {\it skeletally small} if all
isomorphism classes of objects in $\cc$ form a set. Note that a
skeletally small category is called a {\it svelte} category in
\cite[2.1]{GabRoi92}. For a skeletally small category $\cc$ and a
category $\cd$, we denote by $\Fun(\cc,\cd)$ the {\it category of
functors from $\cc$ to $\cd$} whose objects are all functors from
$\cc$ to $\cd$ and whose morphisms are all natural transformations
between these functors.

A {\it $k$-category} is a category $\ca$ whose morphism sets are
endowed with $k$-module structures such that the composition maps
are $k$-bilinear (Ref. \cite[\S 2.1]{GabRoi92}). A {\it $k$-functor}
from a $k$-category $\ca$ to a $k$-category $\cb$ is a functor $F :
\ca \rightarrow \cb$ such that the defining maps $F(A,A') :
\ca(A,A') \rightarrow \cb(FA,FA')$ are $k$-linear for all $A,A' \in
\ca$. For a skeletally small $k$-category $\ca$ and a $k$-category
$\cb$, we denote by $\Fun_k(\ca,\cb)$ the {\it category of
$k$-functors from $\ca$ to $\cb$}, i.e., the full subcategory of the
functor category $\Fun(\ca,\cb)$ consisting of all $k$-functors,
which is also a $k$-category.

Let $\ca$ and $\cb$ be two $k$-categories. The {\it tensor product}
of $\ca$ and $\cb$ is the $k$-category $\ca \otimes_k \cb$ whose
objects are pairs $(A,B)$ and whose Hom sets $(\ca \otimes_k
\cb)((A,B),(A',B')) := \ca(A,A') \otimes_k \cb(B,B')$ for all $A,A'
\in \ca$ and $B,B' \in \cb$ (Ref. \cite[Page 13]{Mit72}). It is
well-known that if $\ca,\cb$ are skeletally small $k$-categories and
$\cc$ is a $k$-category then $\Fun_k(\ca,\Fun_k(\cb,\cc)) \cong
\Fun_k(\ca \otimes_k \cb,\cc)$ (Ref. \cite[Page 13]{Mit72}).

For a skeletally small $k$-category $\ca$, we denote by $\Mod\ca$
the {\it category of right $\ca$-modules}, i.e., the category
$\Fun_k(\ca^\op,\Mod k)$ of $k$-functors from the opposite category
$\ca^\op$ of $\ca$ to $\Mod k$. Clearly, $\Mod\ca$ is an abelian
category. A functor $M \in \Mod \ca$ is {\it representable} if $M
\cong \ca(-,A)$ for some $A \in \ca$. A functor $M \in \Mod \ca$ is
{\it finitely generated} if there is an epimorphism $\oplus_{i \in
I}\ca(-,A_i) \twoheadrightarrow M$ for a finite index set $I$ and
$A_i \in \ca$. A functor $M \in \Mod \ca$ is {\it finitely
presented} if there is an exact sequence $\oplus_{j \in
J}\ca(-,A'_j) \rightarrow \oplus_{i \in I}\ca(-,A_i)
\twoheadrightarrow M$ for two finite index sets $I,J$ and $A_i,A'_j
\in \ca$. Note that once $\ca$ is a skeletally small additive
$k$-category then a functor $M \in \Mod \ca$ is finitely generated
if and only if there is an epimorphism $\ca(-,A) \twoheadrightarrow
M$ for some $A \in \ca$, and a functor $M \in \Mod \ca$ is finitely
presented if and only if there is an exact sequence $\ca(-,A')
\rightarrow \ca(-,A) \twoheadrightarrow M$ for some $A,A' \in \ca$.
Denote by $\mod \ca$ the full subcategory of $\Mod \ca$ consisting
of all finitely presented functors (Ref. \cite[Page 22]{GabRoi92}).
It is well-known that $\mod\ca$ is abelian if and only if $\ca$ has
pseudo-kernels, i.e., for any morphism $f \in \ca(A',A)$ there is a
morphism $f' \in \ca(A'',A')$ such that $\xymatrix{ \ca(-,A'')
\ar[r]^-{\ca(-,f')} & \ca(-,A') \ar[r]^-{\ca(-,f)} & \ca(-,A) }$ is
exact (Ref. \cite[Page 102, Proposition]{Aus71}).

\subsection{Dualizing categories and almost split sequences}

A $k$-category $\ca$ is said to be {\it Hom-finite} if all Hom sets
$\ca(A,A')$ are finitely generated $k$-modules for all $A,A'$ in
$\ca$. A skeletally small additive $k$-category $\ca$ is said to be
{\it Krull-Schmit} if each object in $\ca$ is a finite direct sum of
indecomposables with local endomorphism algebras. Note that a
Krull-Schmidt category is called a {\it multilocular category} in
\cite[3.1]{GabRoi92}. A skeletally small Hom-finite additive
$k$-category $\ca$ is Krull-Schmidt if and only if all idempotents
in $\ca$ split (Ref. \cite[Theorem 3.3]{GabRoi92}), i.e., for each
idempotent $e \in \ca(A,A)$ there are $A' \in \ca$, $f \in
\ca(A,A')$ and $g \in \ca(A',A)$ such that $e=gf$ and $fg=1_{A'}$.

A {\it dualizing $k$-category} or {\it dualizing $k$-variety} $\ca$
is a Hom-finite Krull-Schmidt $k$-category such that the natural
duality $D : \Fun_k(\ca^\op, \mod k) \rightarrow \Fun_k(\ca, \mod
k), F \mapsto DF$, where $(DF)(A):= D(F(A))$ and $(DF)(f):= D(F(f))$
for all $A \in \ca$ and $f \in \ca(A,A')$, induces a duality $D :
\mod \ca \rightarrow \mod \ca^\op$. Dualizing $k$-categories were
introduced by Auslander and Reiten as a generalization of artin
$k$-algebras (Ref. \cite{AusRei74}). For an artin $k$-algebra
$\Lambda$, the category $\proj \Lambda$ of finitely generated
projective $\Lambda$-modules and the category $\mod \Lambda$ of
finitely generated $\Lambda$-modules are dualizing $k$-categories
(Ref. \cite[Proposition 2.5 and Proposition 2.6]{AusRei74}). A {\it
locally bounded $k$-category} is a Hom-finite Krull-Schmidt
$k$-category $\ca$ satisfying that for each $A$ in $\ca$ there are
only finitely many isomorphism classes of indecomposable objects
$A'$ in $\ca$ with $\ca(A',A) \neq 0$ or $\ca(A,A') \neq 0$. This is
the additivization of the locally bounded $k$-category in \cite[\S
2.1]{BonGab82}. Every locally bounded $k$-category is a dualizing
$k$-category (Ref. \cite[Proposition 7.1.5]{Rei82}).

Let $\ca$ be a Krull-Schmidt $k$-category. A {\it right almost split
morphism} in $\ca$ is a morphism $g : Y \rightarrow Z$ in $\ca$
which is not a retraction and for any non-retraction $g' : Y'
\rightarrow Z$ there is a morphism $g'' : Y' \rightarrow Y$ such
that $g'=gg''$. We say that $\ca$ {\it has right almost split
morphisms} if for any indecomposable $Z$ in $\ca$ there is a right
almost split morphism ending in $Z$. A {\it left almost split
morphism} in $\ca$ is a morphism $f : X \rightarrow Y$ in $\ca$
which is not a section and for any non-section $f' : X \rightarrow
Y'$ there is a morphism $f'' : Y \rightarrow Y'$ such that
$f'=f''f$. We say that $\ca$ {\it has left almost split morphisms}
if for any indecomposable $X$ in $\ca$ there is a left almost split
morphism starting in $X$. We say that $\ca$ {\it has almost split
morphisms} if $\ca$ has both right and left almost split morphisms.

Let $\ca$ be an additive $k$-category. A pair $(i,d)$ of composable
morphisms $\xymatrix{ X \ar[r]^-i & Y \ar[r]^-d & Z }$ of $\ca$ is
said to be {\it exact} if $i$ is a kernel of $d$ and $d$ is a
cokernel of $i$. Let $\ce$ be a class of exact pairs $\xymatrix{ X
\ar[r]^-i & Y \ar[r]^-d & Z }$ which is closed under isomorphisms.
An exact pair in $\ce$ is called a {\it conflation}. The morphisms
$i$ and $d$ appearing in a conflation $(i,d)$ are called an {\it
inflation} and a {\it deflation} respectively. The class $\ce$ is
called an {\it exact structure} on $\ca$ and $(\ca,\ce)$ is called
an {\it exact $k$-category} (Ref. \cite[\S 9.1]{GabRoi92} and
\cite[\S 1.1 and Appendix]{DraReiSmaSol99}) if the following axioms
are satisfied:

(E1) The composition of two deflations is a deflation.

(E2) For each $f \in \ca(Z',Z)$ and each deflation $d \in \ca(Y,Z)$,
there is $Y' \in \ca$, $f' \in \ca(Y',Y)$ and a deflation $d' \in
\ca(Y',Z')$ such that $df'=fd'$.

(E3) Identities are deflations. If $de$ is a deflation, then so is
$d$.

(E3$^\op$) Identities are inflations. If $ji$ is an inflation, then
so is $i$.

It is well-known that if $(\ca,\ce)$ is a skeletally small exact
$k$-category then $(\ca^\op,\ce^\op)$ with $\ce^\op :=
\{(d^\op,i^\op) \; | \; (i,d)\in \ce \}$ is an exact $k$-category as
well (Ref. \cite[\S 9.1 Example 2]{GabRoi92}). Note also that an
abelian category admits a natural exact structure whose conflations
are all short exact sequences.

Let $(\ca,\ce)$ be a Krull-Schmidt exact $k$-category. An {\it
almost split sequence} in $(\ca,\ce)$ is a conflation $\xymatrix{ X
\ar[r]^-f & Y \ar[r]^-g & Z}$ in $\ce$ with $f$ a left almost split
morphism and $g$ a right almost split morphism (Ref. \cite[\S
2.2]{DraReiSmaSol99}). We say that $(\ca,\ce)$ {\it has almost split
sequence} if $\ca$ has almost split morphisms, for any
indecomposable non-$\ce$-projective object $Z$ there is an almost
split sequence ending in $Z$, and for any indecomposable
non-$\ce$-injective object $A$ there is an almost split sequence
starting in $X$. It is well-known that if $\ca$ is a dualizing
$k$-category then $\mod\ca$ has almost split sequences (Ref.
\cite[Theorem 7.1.3]{Rei82}).

Let $\ca$ be an additive $k$-category. A full additive $k$-subcategory $\cc$
of $\ca$ is said to be {\it contravariantly finite} in $\ca$ if for
each $A \in \ca$ the restriction $\ca(-,A)|\cc$ of $\ca(-,A)$ to
$\cc$ is a finitely generated functor on $\cc$, i.e., there is an
epimorphism $\cc(-,C) \twoheadrightarrow \ca(-,A)|\cc$ for some $C
\in \cc$. Equivalently, $\cc$ is contravariantly finite in $\ca$ if for each
$A \in \ca$, there is a morphism $C \rightarrow A$ (called the {\it
right $\cc$-approximation} of $A$ in $\cc$) with $C \in \cc$
such that $\cc(C',C) \rightarrow \ca(C',A) \rightarrow 0$ is exact
for all $C' \in \cc$. Dually, a full additive $k$-subcategory $\cc$ of $\ca$
is said to be {\it covariantly finite} in $\ca$ if for each $A \in
\ca$ the restriction $\ca(A,-)|\cc$ of $\ca(A,-)$ to $\cc$ is a
finitely generated functor on $\cc$, i.e., there is an epimorphism
$\cc(C,-) \twoheadrightarrow \ca(A,-)|\cc$ for some $C \in \cc$.
Equivalently, $\cc$ is contravariantly finite in $\ca$ if for each
$A \in \ca$, there is a morphism $C \rightarrow A$ (called the {\it
left $\cc$-approximation} of $A$ in $\cc$) with $C \in \cc$ such
that $\cc(C,C') \rightarrow \ca(A,C') \rightarrow 0$ is exact for
all $C' \in \cc$. Furthermore, a $k$-subcategory $\cc$ of $\ca$ is
said to be {\it functorially finite} in $\ca$ if it is both
contravariantly and covariantly finite in $\ca$. These definition
were introduced by Auslander and Smal{\o} in \cite[\S 3, Page
81]{AusSma80}. If a Krull-Schmidt exact $k$-category $(\ca , \ce)$
has almost split sequences, and $\cc$ is a functorially finite full
$k$-subcategory of $\ca$ closed under conflations and direct
summands, then $(\cc,\ce|\cc)$ has almost split sequences (Ref.
\cite[Theorem 2.4]{AusSma81}).

\section{A construction of dualizing categories}

In this section, we will provide a new construction of dualizing
categories from a given dualizing category by idempotent
completions, additive hulls, and tensor products of $k$-categories.
The representations of quivers in $k$-linear categories will play a
key role in the construction as well.

\subsection{Idempotent completions and additive hulls}

The {\it idempotent completion} of a $k$-category $\ca$ is the
$k$-category $|\ca$ whose objects are all pairs $(A,e)$ with $A \in
\ca$ and $e \in \ca(A,A)$ being idempotent, and whose Hom sets are
$(|\ca)((A,e),(A',e')) := e'\ca(A,A')e$ (Ref. \cite[\S 2.1, Example
7]{GabRoi92}). Clearly, all idempotents in $|\ca$ split, and if
$\ca$ is skeletally small (resp. Hom-finite, additive) then so is
$|\ca$. Moreover, $(|\ca)^\op \cong |(\ca^\op)$.

The {\it additive hull} of a $k$-category $\ca$ is the $k$-category
$\oplus\ca$ whose objects are all $m$-tuples $(A_1, \cdots , A_m)$
with $m \in \mathbb{N}$ and $A_i \in \ca$, and whose Hom sets are
$(\oplus\ca)((A_1, \cdots , A_m), (A'_1, \cdots , A'_n)) :=
(\ca(A_i,A'_j))_{j,i}$ (Ref. \cite[\S 2.1, Example 8]{GabRoi92}).
Clearly, $\oplus\ca$ is additive, and if $\ca$ is skeletally small
(resp. Hom-finite) then so is $\oplus\ca$. Moreover,
$(\oplus\ca)^\op \cong \oplus(\ca^\op)$.

\begin{lemma} \label{Lemma-|+KrullSchmidt}
Let $\ca$ be a skeletally small Hom-finite $k$-category.
Then the idempotent completion $|\oplus\ca$ of the additive hull
$\oplus\ca$ of $\ca$ is a Krull-Schmidt $k$-category.
\end{lemma}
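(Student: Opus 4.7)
The plan is to combine the routine preservation properties of $\oplus(-)$ and $|(-)$ recorded in the Idempotent Completions and Additive Hulls subsection with the Krull-Schmidt criterion \cite[Theorem 3.3]{GabRoi92} stated in the Preliminaries, which says that a skeletally small Hom-finite additive $k$-category is Krull-Schmidt if and only if each of its idempotents splits. Thus the goal reduces to checking four properties of the category $|\oplus\ca$: skeletal smallness, Hom-finiteness, additivity, and the splitting of all idempotents.

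First I would pass from $\ca$ to the additive hull $\oplus\ca$. Since $\ca$ is skeletally small and Hom-finite by assumption, the parenthetical observation about $\oplus\ca$ (that it is always additive, and that $\oplus\ca$ inherits skeletal smallness and Hom-finiteness from $\ca$) immediately gives that $\oplus\ca$ is a skeletally small, Hom-finite, additive $k$-category.

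Next I would take the idempotent completion $|(\oplus\ca)$. Applying the analogous observation about $|(-)$ (it preserves skeletal smallness, Hom-finiteness and additivity, and makes all idempotents split) to the category $\oplus\ca$ constructed in the previous step shows that $|\oplus\ca$ is a skeletally small, Hom-finite, additive $k$-category in which every idempotent splits.

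Finally, invoking the Krull-Schmidt criterion on $|\oplus\ca$ yields the conclusion. There is essentially no serious obstacle here; the lemma is a bookkeeping consequence of the preservation properties already recorded. The only point of care is to use the constructions in the order $|(\oplus\ca)$, since taking the additive hull first guarantees we are within the framework of additive categories to which the iff-criterion applies, and then applying the idempotent completion supplies exactly the splitting of idempotents still needed.
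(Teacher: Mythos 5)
Your proof is correct and follows the same route as the paper: verify that $|\oplus\ca$ is skeletally small, Hom-finite, additive, and has split idempotents, then invoke \cite[Theorem 3.3]{GabRoi92}. The paper's argument is the same, just stated more tersely.
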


\begin{proof} Obviously, $|\oplus\ca$ is a skeletally small Hom-finite additive
$k$-category. Moreover, all idempotents in $|\oplus\ca$ split. It
follows from \cite[Theorem 3.3]{GabRoi92} that $|\oplus\ca$ is
Krull-Schmidt.
\end{proof}

Let $\ca$ be a skeletally small $k$-category. Then the functor $F :
\ca \rightarrow |\ca , A \mapsto (A,1_A)$, is fully faithful, which
induces an equivalence $F_* : \Mod(|\ca) \linebreak \rightarrow
\Mod\ca , M \mapsto MF$ (Ref. \cite[\S 2.2, Example 6]{GabRoi92}).
Moreover, the functor $G : \ca \rightarrow \oplus\ca , A \mapsto
(A)$, is also fully faithful, which induces an equivalence $G_* :
\Mod(\oplus\ca) \rightarrow \Mod\ca , N \mapsto NG$ (Ref. \cite[\S
2.2, Example 7]{GabRoi92}). Restricted to the categories of finitely
presented functors, we have equivalences $\mod(|\ca) \simeq \mod\ca
\simeq \mod(\oplus\ca)$, and further $\mod(|\oplus\ca) \simeq
\mod\ca$.

\subsection{Representations of a quiver in a category}

Let $Q$ be a quiver, i.e., a directed graph. Denote by $Q_0$ the set
of vertices of $Q$ and by $Q_1$ the set of arrows of $Q$. For an
arrow $a \in Q_1$, denote by $s(a)$ and $t(a)$ the source and target
of $a$ respectively. A {\it path} $p$ of length $l \geq 1$ with
source $s(p)=v$ and target $t(p)=w$ is a sequence $a_l \cdots
a_2a_1$ of arrows $a_i$ such that $s(a_1)=v, s(a_{i+1})=t(a_i)$ for
all $1 \leq i \leq l-1$, and $t(a_l)=w$. Besides the paths of length
$\geq 1$, there are also {\it trivial paths} $1_v$ of length $0$
with both source and target $v$ for all $v \in Q_0$. A quiver $Q$
gives rise to the {\it category of paths of $Q$}, denoted by $Q$ as
well, whose objects are vertices of $Q$, whose Hom sets $Q(v,w)$
consists of the paths with source $v$ and target $w$, and the
composition is the juxtaposition of paths. Usually, we view a quiver
as a category
--- its category of paths. A quiver $Q$ also gives rise to the {\it
$k$-category of paths of $Q$}, denoted by $kQ$, whose objects are
vertices of $Q$ and whose Hom sets $(kQ)(v,w)$ are the $k$-vector
spaces with a basis $Q(v,w)$ (Ref. \cite[\S 2.1 Example
6]{GabRoi92}). The {\it opposite quiver} of a quiver $Q$ is the
quiver $Q^\op$ with $(Q^\op)_0 := Q_0$ and $(Q^\op)_1 := \{a^\op \;
| \; a \in Q_1, s(a^\op) := t(a), t(a^\op) := s(a) \}$. Obviously,
the ($k$-)category of paths of $Q^\op$ is isomorphic to the opposite
category of the ($k$-)category of paths of $Q$.

To a quiver $Q$, we associate a quiver $P(Q)^l$ called the {\it left
path space of $Q$}, whose vertices are the paths $p$ of $Q$ and
whose arrows are pairs $(p,ap) : p \rightarrow ap$ with $a \in Q_1$
satisfying $t(p)=s(a)$. For a vertex $v$ of $Q$, denote by
$P(Q)^l_v$ the connected component of $P(Q)^l$ that $1_v$ lies in.

A {\it representation} of a quiver $Q$ in a category $\ca$ is a
functor $R \in \Fun(Q,\ca)$. Obviously, a representation $R$ of $Q$
in $\ca$ is determined by assigning an object $R(v) \in \ca$ to each
vertex $v \in Q_0$ and a morphism $R(a) \in \ca(R(s(a)),R(t(a)))$ to
each arrow $a \in Q_1$. A {\it morphism} $\phi$ between two
representations $R$ and $R'$ is a natural transformation, i.e., a
family of morphisms $\phi_v \in \ca(R(v),R'(v))$ with $v \in Q_0$
such that $R'(a) \circ \phi_{s(a)} = \phi_{t(a)} \circ R(a)$, i.e.,
the following diagram is commutative:
$$\xymatrix{ R(s(a)) \ar[r]^-{\phi_{s(a)}} \ar[d]^-{R(a)} & R'(s(a))
\ar[d]^-{R'(a)} \\ R(t(a)) \ar[r]^-{\phi_{t(a)}} & R'(t(a)) }$$ for
all arrows $a \in Q_1$. Obviously, the category $\Rep(Q, \ca)$ of
representations of a quiver $Q$ in a $k$-category $\ca$, i.e, the
functor category $\Fun(Q,\ca)$, is isomorphic to the $k$-functor
category $\Fun_k(kQ,\ca)$.

Let $I$ be an {\it ideal} of $Q$ consisting of some paths of length
at least 2, i.e., a set of paths of length at least 2 closed under
left or right concatenation with any concatenatable path of $Q$, or
equivalently, a set of paths of length at least 2 such that all the
paths of $Q$ containing a path in $I$ as a subpath are in $I$. We
define $Q_I$, called {\it quiver $Q$ with monomial relations $I$},
to be the category whose objects are all vertices of $Q$ and whose
morphisms are all the paths of $Q$ not in $I$. Clearly, the category
$\Rep(Q_I,\ca) := \Fun(Q_I,\ca)$ of representations of $Q_I$ in a
pre-additive category $\ca$ is isomorphic to the full subcategory of
$\Rep(Q,\ca)$ consisting of all representations $R$ in $\Rep(Q,\ca)$
such that $R(p)=0$ for all $p \in I$. By abuse of terminology, we
still denote by $I$ the ideal of $kQ$ generated by all paths in $I$,
which will not cause any confusion. Then the category
$\Rep(Q_I,\ca)$ of representations of $Q_I$ in a $k$-category $\ca$
is isomorphic to $\Fun_k(kQ/I,\ca)$.

For a vertex $v \in Q_0$, we define $P(Q_I)^l_v = \{p \in P(Q)^l_v
\; | \; p \notin I\}$. Denote by $\mathbb{A}_1$ the quiver having
just one vertex $\bullet$ and no arrows. We define functors $f_v :
\mathbb{A}_1 \rightarrow Q_I, \bullet \mapsto v$, $g_v :
\mathbb{A}_1 \rightarrow P(Q_I)^l_v, \bullet \mapsto 1_v$, and $t_v
: P(Q_I)^l_v \rightarrow Q_I$ as follows: for a vertex $p$ of
$P(Q_I)^l_v$, $t_v(p) := t(p)$; for an arrow $(p,ap)$ of
$P(Q_I)^l_v$, $t_v(p,ap) := a$. Thus $f_v=t_v \circ g_v$. Note that
each functor $h$ from a skeletally small category $\cb$ to a
skeletally small category $\cc$ induces a functor $h_* : \Fun(\cc,
\ca) \rightarrow \Fun(\cb, \ca), F \mapsto F \circ h$. So we have
induced functors $f_{v*} : \Rep(Q_I,\ca) \rightarrow
\Rep(\mathbb{A}_1,\ca)$, $g_{v*} : \Rep(P(Q_I)^l_v,\ca) \rightarrow
\Rep(\mathbb{A}_1,\ca)$ and $t_{v*} : \Rep(Q_I,\ca) \rightarrow
\Rep(P(Q_I)^l_v, \ca)$. Moreover, $f_{v*} = (t_v \circ g_v)_* =
g_{v*} \circ t_{v*}$.

Let $\ca$ be a cocomplete category, i.e., small coproducts exist in
$\ca$. Completely analogous to \cite[Proposition 3.1 and
3.2]{EnoOyoTor04}, the functors $g_{v*}$ and $t_{v*}$ have left
adjoints $g^*_v : \Rep(\mathbb{A}_1,\ca) \rightarrow
\Rep(P(Q_I)^l_v,\ca)$ and $t^*_v : \Rep(P(Q_I)^l_v,\ca) \linebreak
\rightarrow \Rep(Q_I,\ca)$ respectively. Thus the functor $f_{v*}$
has a left adjoint $f^*_v := t^*_v \circ g^*_v$. The functor $g^*_v
: \Rep(\mathbb{A}_1,\ca) \rightarrow \Rep(P(Q_I)^l_v,\ca)$ is
defined as follows: for a representation $A$ in
$\Rep(\mathbb{A}_1,\ca)$, or equivalently, an object $A \in \ca$, we
define $g^*_v(A)$ to be the representation of $P(Q_I)^l_v$ which
sends each vertex to $A$, and each arrow to $1_A$; for a morphism in
$\Rep(\mathbb{A}_1,\ca)$, or equivalently, a morphism $f : A
\rightarrow A'$ in $\ca$, we define $g^*_v(f)_p = f$ for all vertex
$p$ of $P(Q_I)^l_v$. The left adjoint $t^*_v$ of $t_{v*}$ is defined
as follows: for a given representation $M$ of $P(Q_I)^l_v$, we
define $t^*_v(M)$ as follows: for a vertex $w$ of $Q$, $t^*_v(M)(w)
:= \oplus_{p \in Q_I(v,w)}M(p)$ where $M(p)$ is the object in $\ca$
corresponding to the vertex $p$ of $P(Q_I)^l_v$ under the
representation $M$ of $P(Q_I)^l_v$; for an arrow $a \in Q_1$, the
morphism $t^*_v(M)(a) : t^*_v(M)(s(a)) \rightarrow t^*_v(M)(t(a))$
is defined as
$$(\alpha_{qp})_{q,p} : \oplus_{p
\in Q_I(v,s(a))} M(p) \rightarrow \oplus_{q \in Q_I(v,t(a))}M(q)$$
where $\alpha_{qp} := M(p,ap) : M(p) \rightarrow M(ap)$ is the
morphism in $\ca$ corresponding to the arrow $(p,ap) : p \rightarrow
ap$ of $P(Q_I)^l_v$ under the representation $M$ of $P(Q_I)^l_v$ if
$q=ap$, and $\alpha_{qp} := 0$ otherwise. Moreover, we need to
define $t^*_v$ for morphisms. If $f : M \rightarrow M'$ is a
morphism in $\Rep(P(Q_I)^l_v, \ca)$ then, for each vertex $p$ of
$P(Q_I)^l_v$, we have a morphism $f_p \in \ca(M(p),M'(p))$. For each
vertex $w$ of $Q$, we define the morphism $t^*_v(f)_w := \oplus_{p
\in Q_I(v,w)}f_p \in \ca(\oplus_{p \in Q_I(v,w)}M(p), \oplus_{p \in
Q_I(v,w)}M'(p))$.

\begin{lemma} \label{Lemma-Projective} {\rm
(See \cite[Theorem 3.3]{EnoOyoTor04} and \cite[Page 3217]{Est07})}
Let $Q$ be a quiver, $I$ a closed set of monomial relations of $Q$,
and $\ca$ a cocomplete abelian category having enough projective
objects. Then $\Rep(Q_I,\ca)$ is a cocomplete abelian category
having enough projective objects and $\{f^*_v(P) \; | \; v \in Q_0,
\; P \in \ca \; \mbox{projecive} \}$ is a family of projective
generators for $\Rep(Q_I, \ca)$.
\end{lemma}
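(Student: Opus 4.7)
The plan is to reduce everything to the adjunction $(f^*_v,f_{v*})$ already constructed in the excerpt, plus pointwise computation of (co)limits.

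First I would verify that $\Rep(Q_I,\ca)=\Fun(Q_I,\ca)$ inherits cocompleteness and abelianness from $\ca$ via the standard pointwise construction: for a diagram $\{R_\lambda\}$ of representations, define the colimit vertex-wise by $(\mathrm{colim}\,R_\lambda)(v):=\mathrm{colim}(R_\lambda(v))$ and let each arrow $a\in Q_1$ act by the colimit of the morphisms $R_\lambda(a)$; zero relations coming from $I$ are preserved since each $R_\lambda(p)=0$. The same recipe gives kernels and cokernels pointwise, so a sequence of representations is exact if and only if it is exact at every vertex. In particular, the evaluation functor $f_{v*}:\Rep(Q_I,\ca)\to\Rep(\mathbb{A}_1,\ca)=\ca$ is exact for every $v\in Q_0$.

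Next I would extract projectivity of $f^*_v(P)$ from the adjunction. For any $R\in\Rep(Q_I,\ca)$ one has the natural isomorphism
\[
\Rep(Q_I,\ca)(f^*_v(P),R)\;\cong\;\ca(P,f_{v*}(R))\;=\;\ca(P,R(v)).
\]
If $P$ is projective in $\ca$, then $\ca(P,-)$ is exact, and composed with the exact functor $f_{v*}$ we conclude $\Rep(Q_I,\ca)(f^*_v(P),-)$ is exact, i.e.\ $f^*_v(P)$ is projective. This is the standard fact that a left adjoint of an exact functor preserves projectives, transplanted to the present setting.

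For the generating property, fix $R\in\Rep(Q_I,\ca)$. Since $\ca$ has enough projectives, for each $v\in Q_0$ choose a projective $P_v\in\ca$ and an epimorphism $\pi_v:P_v\twoheadrightarrow R(v)$. By adjunction, each $\pi_v$ corresponds to a morphism $\widetilde\pi_v:f^*_v(P_v)\to R$ in $\Rep(Q_I,\ca)$, and I would consider the induced morphism
\[
\Phi\;:\;\bigoplus_{v\in Q_0} f^*_v(P_v)\;\longrightarrow\;R.
\]
The direct sum on the left is again projective (coproducts of projectives are projective in any cocomplete abelian category with enough projectives, via the adjunction argument above applied vertex-wise). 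To see $\Phi$ is an epimorphism, I would evaluate at an arbitrary vertex $w$: by construction of $t^*_v$, the summand $f^*_w(P_w)(w)$ contains the $1_w$-component, which is precisely $P_w$, and the $w$-component of $\widetilde\pi_w$ restricted to this summand is the original epimorphism $\pi_w$. Thus $\Phi_w$ is already surjective on the $1_w$-summand, hence an epimorphism in $\ca$; since epimorphisms in $\Rep(Q_I,\ca)$ are detected pointwise, $\Phi$ is an epimorphism.

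The main obstacle, and the only place where care is required, is the pointwise verification that $\Phi_w$ is epic --- one must unwind the definitions of $t^*_v$ and $g^*_v$ to identify the $1_w$-summand and confirm that the adjunction really identifies the restriction of $\widetilde\pi_w$ on that summand with $\pi_w$. Once this compatibility is nailed down, the lemma follows formally.
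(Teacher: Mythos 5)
Your argument is correct and follows essentially the same path as the paper's: both use the adjoint isomorphism to transport an epimorphism $P_v\twoheadrightarrow R(v)$ to a morphism $f^*_v(P_v)\to R$, obtain projectivity of $f^*_v(P_v)$ from exactness of the evaluation functor $f_{v*}$, and deduce surjectivity of the assembled map by restricting to the $1_w$-summand of $f^*_w(P_w)(w)$. Your version is in fact slightly more careful than the paper's at the one subtle step, correctly distinguishing the $1_w$-component $P_w$ of $f^*_w(P_w)(w)=\oplus_{p\in Q_I(w,w)}P_w$ from the whole object, where the paper's phrasing is a bit loose.
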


\begin{proof} Clearly, $\Rep(Q_I,\ca)$ is a cocomplete abelian
category. Now we show that it has enough projective objects. We have
known that there are adjoint isomorphisms
$$(\Rep(Q_I,\ca))(f^*_v(A),R) \cong
(\Rep(\mathbb{A}_1,\ca))(A,f_{v*}(R))$$ for all $R \in
\Rep(Q_I,\ca)$ and $A \in \Rep(\mathbb{A}_1,\ca) = \ca$. Note that
we always identify the category $\Rep(\mathbb{A}_1,\ca)$ with $\ca$.
For any $R \in \Rep(Q_I, \ca)$, we have $f_{v*}(R) \in \ca =
\Rep(\mathbb{A}_1,\ca)$. Since $\ca$ has enough projective objects,
there is a projective object $P_v \in \ca$ and an epimorphism $d_v :
P_v \twoheadrightarrow f_{v*}(R)$. By the adjoint isomorphism, there
is a unique morphism $\phi^v : f^*_v(P_v) \rightarrow R$ such that
$(\phi^v)_v :(f^*_v(P_v))(v) \rightarrow R(v)$ is just $d_v : P_v
\rightarrow f_{v*}(R)$. Repeat this procedure for every vertex $v$
of $Q_I$, we get morphisms of representations $\phi^v : f^*_v(P_v)
\rightarrow R$ for all $v$ of $Q_I$. By definition, $f_{v*}$
preserves exactness. Thus $f^*_v$ preserves projectiveness. Hence
$f^*_v(P_v)$ is a projective representation of $Q_I$. So is
$\oplus_{v \in Q_0}f^*_v(P_v)$. Furthermore, the morphism of
representations $\phi=(\phi^v)_{v \in Q_0} : \oplus_{v \in Q_0}
f^*_v(P_v) \rightarrow R$ is an epimorphism, since the restriction
$(\phi^v)_v=d_v$ of $\phi_v$ on the $v$-component $f^*_v(P_v)(v)$ of
$(\oplus_{v \in Q_0} f^*_v(P_v))(v)$ is. It follows that $\{f^*_v(P)
\; | \; v \in Q_0, P \in \ca \; \mbox{projecive} \}$ is a family of
projective generators for $\Rep(Q_I, \ca)$.
\end{proof}

\subsection{A construction of dualizing categories}

A quiver $Q$ is said to be {\it locally finite} if for each vertex
$v$ of $Q$, there are only finitely many arrows of $Q$ with source
or target $v$ (\cite[\S 8.3, Example 2]{GabRoi92}). Let $Q$ be a
quiver, $kQ$ the $k$-category of paths of $Q$, $k^1Q$ the ideal of
$kQ$ generated by all arrows of $Q$, and $k^rQ := (k^1Q)^r$ for all
$r \in \mathbb{N}$. An ideal $I$ of $kQ$ is said to be {\it
admissible} if $I \subseteq k^2Q$ and for each vertex $v$ of $Q$
there is an $l_v \in \mathbb{N}$ such that $I$ contains all paths of
length $\geq l_v$ with source or target $v$ (Ref. \cite[\S 8.3,
Example 2]{GabRoi92})).

{\it From now on, we always assume that $I$ is an admissible ideal
of $kQ$ generated by a set of paths in $Q$.} So all paths of $Q$ in
$I$ form an ideal $J$ of $Q$ consisting of some paths of length at
least 2. Once we denote by $Q_I$ the category whose objects are
vertices of $Q$ and whose morphisms are all paths of $Q$ that are
not in $I$, then $Q_I = Q_J$. Moreover, the opposite category
$(kQ/I)^\op$ is a residue category of $(kQ)^\op=kQ^\op$ modulo an
admissible ideal $I^\op$ of $kQ^\op$ generated by a set of paths in
$Q^\op$. Indeed, all paths of $Q^\op$ in $I^\op$ form an ideal
$J^\op = \{ a^\op_1a^\op_2 \cdots a^\op_l \; | \; a_l \cdots a_2a_1
\in J \}$ of $Q^\op$. Let $\ca$ be an additive $k$-category. We
denote by $\rep(kQ/I,\ca)$ (resp. $\rep(Q_I,\ca)$) the full
subcategory of $\Rep(kQ/I,\ca)$ (resp. $\Rep(Q_I,\ca)$) consisting
of all {\it support-finite} representations, i.e., the
representations $R$ satisfying $R(v) \neq 0$ for only finitely many
vertices $v$ of $Q$. Then $\rep(kQ/I,\ca) \simeq \rep(Q_I,\ca)$.

\begin{proposition} \label{Proposition-rep-mod}
Let $Q$ be a locally finite quiver, $I$ an admissible ideal of $kQ$
generated by a set of paths of $Q$, $\cb:=kQ/I$ and $\ca$ a
skeletally small additive $k$-category with pseudo-kernels. Then the
equivalence $\Rep(\cb,\Mod\ca) \simeq \Mod(\cb^\op \otimes_k \ca)$
restricts to an equivalence $\rep(\cb,\mod\ca) \simeq \mod(\cb^\op
\otimes_k \ca)$.

$$\xymatrix{ \Rep(\cb,\Mod\ca) \ar[r]^-{\simeq} & \Mod(\cb^\op \otimes_k \ca) \\
\rep(\cb,\mod\ca) \ar@{^{(}->}[u] \ar[r]^-{\simeq} & \mod(\cb^\op
\otimes_k \ca) \ar@{^{(}->}[u] }$$
\end{proposition}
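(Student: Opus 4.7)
The plan is to prove both inclusions of the bottom equivalence using the top equivalence together with the description of representable functors in $\Mod(\cb^\op \otimes_k \ca)$ via the left adjoints $f^*_v$ constructed in Section 3.2. The top equivalence itself is the standard tensor-hom adjunction
$$\Rep(\cb, \Mod\ca) = \Fun_k(\cb, \Fun_k(\ca^\op, \Mod k)) \cong \Fun_k(\cb \otimes_k \ca^\op, \Mod k) = \Mod(\cb^\op \otimes_k \ca).$$
Under it, the representable $(\cb^\op \otimes_k \ca)(-, (v, A))$ corresponds to the representation $R_{v,A} : b \mapsto \cb(v, b) \otimes_k \ca(-, A)$. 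Because $Q$ is locally finite and $I$ is admissible, $\cb(v, b)$ is a finitely generated free $k$-module with basis $Q_I(v, b)$, nonzero for only finitely many $b$; hence $R_{v,A}$ is canonically identified with $f^*_v(\ca(-, A))$, and in particular lies in $\rep(\cb, \mod\ca)$.

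For the forward direction, given $X \in \mod(\cb^\op \otimes_k \ca)$, transport a finite presentation of $X$ by representables across the top equivalence to obtain $P_1 \to P_0 \to R \to 0$ in $\Rep(\cb, \Mod\ca)$ where $P_0, P_1$ are finite direct sums of $R_{v,A}$'s. Each $P_i$ is then support-finite with values in $\mod\ca$; since support-finiteness is clearly preserved by cokernels and $\mod\ca$ is closed under cokernels in $\Mod\ca$, the representation $R$ lies in $\rep(\cb, \mod\ca)$.

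For the reverse direction, given $R \in \rep(\cb, \mod\ca)$ with finite support $S$, for each $v \in S$ choose an epimorphism $d_v : \ca(-, A_v) \twoheadrightarrow R(v)$ in $\mod\ca$. The $(f^*_v, f_{v*})$-adjunction turns $d_v$ into a morphism $\phi^v : f^*_v(\ca(-, A_v)) \to R$; summing yields $\phi : P_0 := \oplus_{v \in S} f^*_v(\ca(-, A_v)) \to R$. At any vertex $w$ with $w \notin S$, surjectivity of $\phi_w$ is trivial since $R(w) = 0$; and at $w \in S$, the $v = w$ summand contains (via the trivial path $1_w$) a copy of $\ca(-, A_w)$ whose image under $\phi_w$ is $d_w$, which is surjective onto $R(w)$. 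Setting $K := \Ker \phi$ in $\Rep(\cb, \Mod\ca)$, the hypothesis that $\ca$ has pseudo-kernels makes $\mod\ca$ abelian and closed under kernels in $\Mod\ca$, so $K(w) \in \mod\ca$ for every $w$; moreover $K$ has finite support (contained in that of $P_0$). Repeating the covering step for $K$ produces a finite presentation $P_1 \to P_0 \to R \to 0$ whose transport across the top equivalence exhibits the image of $R$ as a finitely presented object of $\Mod(\cb^\op \otimes_k \ca)$. The main obstacle is the bookkeeping that keeps support-finiteness and pointwise finite presentation intact at every step: admissibility of $I$ combined with local finiteness of $Q$ underwrites the finite support of each $f^*_v(\ca(-, A))$, while the pseudo-kernel hypothesis on $\ca$ is essential for the kernel $K$ to remain pointwise finitely presented, without which the second step of the resolution could fail.
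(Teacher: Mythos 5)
Your proof is correct and follows essentially the same path as the paper's: establish the tensor--hom equivalence, identify the representables $(\cb^\op\otimes_k\ca)(-,(v,A))$ with $f^*_v(\ca(-,A))$, observe these are support-finite with values in $\mod\ca$ by local finiteness of $Q$ and admissibility of $I$, and use pseudo-kernels to keep kernels in $\mod\ca$ while iterating the covering step. The paper abbreviates the iteration by citing the proof of Lemma~\ref{Lemma-Projective} and, in the other direction, argues pointwise that $\cb^\op(v',v)\otimes_k\ca(-,A)$ is a finite sum of representables in $\Mod\ca$, but the content matches your cokernel/kernel bookkeeping.
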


\begin{proof} First of all, we have natural equivalences $$\begin{array}{rcl} \Rep(\cb, \Mod\ca)
& = & \Fun_k(\cb, \Fun_k(\ca^\op, \Mod k)) \\ & \cong & \Fun_k(\cb
\otimes_k \ca^\op, \Mod k) \\ & \cong & \Fun_k((\cb^\op \otimes_k
\ca)^\op, \Mod k) \\ & = & \Mod(\cb^\op \otimes_k \ca).
\end{array}$$ The equivalence functor $\Phi : \Rep(\cb, \Mod\ca)
\rightarrow \Mod(\cb^\op \otimes_k \ca)$ is given by $\Phi(R)((v,A))
:= R(v)(A)$ and $\Phi(R)(a \otimes f) := R(a)_A \circ R(s(a))(f) =
R(t(a))(f) \circ R(a)_{A'} : R(s(a))(A') \rightarrow R(t(a))(A)$
$$\xymatrix{ R(s(a))(A) \ar[r]^-{R(a)_A} & R(t(a))(A) \\
R(s(a))(A') \ar[r]^-{R(a)_{A'}} \ar[u]_-{R(s(a))(f)} & R(t(a))(A')
\ar[u]_-{R(t(a))(f)} }$$ for all arrows $a \in Q_1$ and morphisms $f
: A \rightarrow A'$ in $\ca$. The quasi-inverse $\Psi : \Mod(\cb^\op
\otimes_k \ca) \rightarrow \Rep(\cb, \Mod\ca)$ of $\Phi$ is given by
$\Psi(M)(v)(A) := M((v,A))$ and $\Psi(M)(v)(f) := M(1_v \otimes f)$
for all $v \in Q_0, A \in \ca, f \in \ca(A,A')$, $\Psi(M)(a)_A :=
M(a \otimes 1_A)$ for all $a \in Q_1, A \in \ca$, and $(\Psi(F)_v)_A
:= F_{(v,A)} : \Psi(M)(v)(A) = M((v,A)) \rightarrow \Psi(M')(v)(A) =
M'((v,A))$ for all $v \in Q_0, A \in \ca, F \in (\Mod(\cb^\op
\otimes_k \ca))(M,M')$.

Since $\Mod\ca$ has enough projective objects $\ca(-,A)$ with $A \in
\ca$, by Lemma~\ref{Lemma-Projective}, $\Rep(\cb,\Mod\ca) \simeq
\Rep(Q_I,\Mod\ca)$ has enough projective objects $\{P_{v,A} :=
f^*_v(\ca(-,A)) \; | \; v \in Q_0, A \in \ca \}$. Note that $Q$ is
locally finite and $I$ is admissible, by the definition of $f^*_v$,
we have $P_{v,A} \in \rep(Q_I,\mod\ca)$. Since $\ca$ has
pseudo-kernels, $\mod\ca$ is abelian \cite[Page 102,
Proposition]{Aus71}. So is $\rep(\cb,\mod\ca) \simeq
\rep(Q_I,\mod\ca)$. For each $R \in \rep(\cb,\mod\ca)$, by the proof
of Lemma~\ref{Lemma-Projective}, there is an exact sequence
$$\oplus_{v \in Q_0,A \in |\ca|} P_{v,A}^{m_{v,A}} \rightarrow
\oplus_{v \in Q_0,A \in |\ca|} P_{v,A}^{n_{v,A}} \twoheadrightarrow
R$$ where $|\ca|$ is a complete set of representatives of
isomorphism classes of objects in $\ca$, and the nonnegative
integers $m_{v,A}$ and $n_{v,A}$ are nonzero for only finitely many
pairs $(v,A) \in Q_0 \times |\ca|$. It is not difficult to check
that $\Phi(P_{v,A}) = \cb(v,-) \otimes_k \ca(-,A) = \cb^\op(-,v)
\otimes_k \ca(-,A) = (\cb^\op \otimes_k \ca)(-,(v,A))$. Applying the
equivalence functor $\Phi$ to the above exact sequence, we obtain an
exact sequence
$$\begin{array}{lll} \oplus_{v \in Q_0,A \in |\ca|} (\cb^\op \otimes_k
\ca)(-,(v,A))^{m_{v,A}} & \rightarrow & \\ \oplus_{v \in Q_0,A \in
|\ca|} (\cb^\op \otimes_k \ca)(-,(v,A))^{n_{v,A}} &
\twoheadrightarrow & \Phi(R). \end{array}$$ Namely, $\Phi(R) \in
\mod(\cb^\op \otimes_k \ca)$.

Conversely, for any $M \in \mod(\cb^\op \otimes_k \ca)$, there is an
exact sequence $$\begin{array}{lll} \oplus_{v \in Q_0,A \in |\ca|}
(\cb^\op \otimes_k \ca)(-,(v,A))^{m_{v,A}} & \rightarrow & \\
\oplus_{v \in Q_0,A \in |\ca|} (\cb^\op \otimes_k
\ca)(-,(v,A))^{n_{v,A}} & \twoheadrightarrow & M  \end{array}$$
where the nonnegative integers $m_{v,A}$ and $n_{v,A}$ are nonzero
for only finitely many pairs $(v,A)$. Thus for each $v' \in Q_0$,
there is an exact sequence
$$\begin{array}{lll} \oplus_{v \in Q_0, A \in
|\ca|} (\cb^\op \otimes_k \ca)((v',-),(v,A))^{m_{v,A}} & \rightarrow & \\
\oplus_{v \in Q_0, A \in |\ca|} (\cb^\op \otimes_k
\ca)((v',-),(v,A))^{n_{v,A}}  & \twoheadrightarrow &
M(v',-)=\Psi(M)(v'),
\end{array}$$ i.e.,
$$\begin{array}{lll} \oplus_{v \in Q_0, A \in |\ca|} (\cb^\op(v',v) \otimes_k
\ca(-,A))^{m_{v,A}} & \rightarrow & \\ \oplus_{v \in Q_0, A \in
|\ca|} (\cb^\op(v',v) \otimes_k \ca(-,A))^{n_{v,A}} &
\twoheadrightarrow & M(v',-)=\Psi(M)(v'). \end{array}$$ Since $Q$ is
locally finite and $I$ is admissible and generated by a set of
paths, all $\cb(v,v') = \cb^\op(v',v)$ are finitely generated free
$k$-modules and there are only finitely many $v' \in Q_0$ such that
$\cb(v,v') = \cb^\op(v',v) \neq 0$. Hence $\Psi(M) \in
\rep(\cb,\mod\ca)$.
\end{proof}

\begin{remark}{\rm A sequence $\xymatrix{ 0 \ar[r] & R \ar[r]^-f &
R' \ar[r]^-{f'} & R'' \ar[r] & 0 }$ in the functor category
$\Rep(kQ/I,\Mod\ca)$ is exact if $\xymatrix{ 0 \ar[r] & R(v)
\ar[r]^-{f_v} & R'(v) \ar[r]^-{f'_v} & R''(v) \ar[r] & 0 }$ is exact
in $\Mod\ca$ for all $v \in Q_0$. It gives a natural exact structure
$\cf$ on the abelian category $\Rep(kQ/I,\Mod\ca)$ such that the
natural equivalence $\Rep(kQ/I,\Mod\ca) \simeq \Mod((kQ/I)^\op
\otimes_k \ca)$ preserves exactness. }\end{remark}

Our main result in this paper is the following theorem:

\begin{theorem} \label{Theorem-dualizing}
Let $Q$ be a locally finite quiver, $I$ an admissible ideal of $kQ$
generated by a set of paths in $Q$, $\cb:=kQ/I$ and $\ca$ a
dualizing $k$-category. Then the following statements hold:

{\rm (1)} The $k$-category $|\oplus(\cb \otimes_k \ca)$ is
dualizing.

{\rm (2)} The abelian $k$-category $\mod(\cb \otimes_k \ca)$ is
dualizing.

{\rm (3)} The abelian $k$-category $\mod(\cb \otimes_k \ca)$ has
almost split sequences.

\end{theorem}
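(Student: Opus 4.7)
The plan is to deduce (2) and (3) from (1) using the equivalences $\mod(|\oplus \cc) \simeq \mod \cc$ recalled in Section~3.1, together with the known facts that $\mod \cd$ is dualizing for any dualizing $k$-category $\cd$ (\cite[Proposition~2.6]{AusRei74}) and that $\mod \cd$ has almost split sequences in that case (\cite[Theorem~7.1.3]{Rei82}). The substance of the argument is therefore concentrated in (1).

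For (1), I would first verify the structural hypotheses. Local finiteness of $Q$ and admissibility of $I$ generated by paths make each $\cb(v,w)$ a finitely generated free $k$-module, so combined with Hom-finiteness of $\ca$ the tensor product $\cb \otimes_k \ca$ is Hom-finite; this property is inherited by the additive hull and by the idempotent completion, and Lemma~\ref{Lemma-|+KrullSchmidt} then supplies the Krull-Schmidt property. For the duality, the equivalence $\mod(|\oplus \cc) \simeq \mod \cc$ together with $(|\oplus \cc)^{\op} \cong |\oplus (\cc^{\op})$ reduces the claim to producing a duality between $\mod(\cb \otimes_k \ca)$ and $\mod(\cb^{\op} \otimes_k \ca^{\op})$. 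Since $\ca$ is dualizing it has pseudo-kernels, and $\ca^{\op}$ has them too (the given duality $D : \mod \ca \rightarrow \mod \ca^{\op}$ forces $\mod \ca^{\op}$ to be abelian); moreover $\cb^{\op}$ is of the form $kQ^{\op}/I^{\op}$ with $Q^{\op}$ locally finite and $I^{\op}$ admissible and generated by paths, as noted before Proposition~\ref{Proposition-rep-mod}. Applying Proposition~\ref{Proposition-rep-mod} to the pairs $(\cb^{\op},\ca)$ and $(\cb,\ca^{\op})$ yields
\[ \mod(\cb \otimes_k \ca) \simeq \rep(\cb^{\op}, \mod \ca), \qquad \mod(\cb^{\op} \otimes_k \ca^{\op}) \simeq \rep(\cb, \mod \ca^{\op}). \]
Post-composition with the duality $D$ sends a support-finite functor $\cb^{\op} \rightarrow \mod \ca$ to a support-finite functor $\cb \rightarrow \mod \ca^{\op}$, defining a duality between the right-hand sides; unwinding the identifications then yields the duality required in (1). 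Parts (2) and (3) are immediate corollaries: applying \cite[Proposition~2.6]{AusRei74} and \cite[Theorem~7.1.3]{Rei82} to the dualizing category produced in (1) and transferring the conclusions along $\mod(|\oplus(\cb \otimes_k \ca)) \simeq \mod(\cb \otimes_k \ca)$ gives both statements.

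The hard part will be checking that the chain of equivalences genuinely carries the natural duality on $\mod \ca$ to the intrinsic duality on $\mod(|\oplus(\cb \otimes_k \ca))$, rather than merely producing an abstract contravariant equivalence between the two categories. I would handle this by verifying compatibility on the projective generators $f^{*}_v(\ca(-,A))$ from Lemma~\ref{Lemma-Projective} and the proof of Proposition~\ref{Proposition-rep-mod}: the functor $\Phi$ identifies $f^{*}_v(\ca(-,A))$ with the representable $(\cb^{\op} \otimes_k \ca)(-,(v,A))$, and post-composition by $D$ matches the canonical duality on representables indexed by $(v,A)$; compatibility then propagates to all finitely presented objects via their two-term projective presentations.
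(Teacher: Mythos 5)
Your proof follows the paper's own argument essentially step for step: part (1) is reduced, via the equivalences $\mod(|\oplus\cc)\simeq\mod\cc$ and Proposition~\ref{Proposition-rep-mod}, to the observation that post-composition with the duality $D:\mod\ca\to\mod\ca^{\op}$ gives a duality between the support-finite representation categories $\rep(\cb^{\op},\mod\ca)$ and $\rep(\cb,\mod\ca^{\op})$, and then (2) and (3) follow formally from \cite[Proposition~2.6]{AusRei74} and \cite[Theorem~7.1.3]{Rei82} exactly as you say. The compatibility you flag at the end is in fact automatic rather than a hard verification, since the natural duality on every $\Fun_k(-,\mod k)$ in the chain is post-composition with $D$ on $\mod k$ and the intervening equivalences (restriction along $\ca\to|\oplus\ca$ and the Fubini isomorphism of Proposition~\ref{Proposition-rep-mod}) manifestly commute with that operation; the paper treats this the same way, displaying the commuting ladder of dualities without further comment.
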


\begin{proof} (1) By assumption, $\cb$ is a skeletally small
Hom-finite $k$-category. So is $\cb \otimes_k \ca$. It follows from
Lemma~\ref{Lemma-|+KrullSchmidt} that $|\oplus(\cb \otimes_k \ca)$
is Hom-finite and Krull-Schmidt. So we only need to show that the
duality $D : \Fun_k((|\oplus(\cb \otimes_k \ca))^\op,\mod k)
\rightarrow \Fun_k(|\oplus(\cb \otimes_k \ca),\mod k)$ induces a
duality $D : \mod(|\oplus(\cb \otimes_k \ca)) \rightarrow
\mod(|\oplus(\cb \otimes_k \ca))^\op$.

Since $(|\oplus(\cb \otimes_k \ca))^\op \cong |\oplus(\cb \otimes_k
\ca)^\op \cong |\oplus(\cb^\op \otimes_k \ca^\op)$, we have
$\mod(|\oplus(\cb \otimes_k \ca))^\op \simeq \mod(\cb^\op \otimes_k
\ca^\op)$. On the other hand, we have $\mod(|\oplus(\cb \otimes_k
\ca)) \simeq \mod(\cb \otimes_k \ca)$ as well. Thus it is enough to
prove that the duality $D$ above induces a duality $D : \mod(\cb
\otimes_k \ca) \rightarrow \mod(\cb \otimes_k \ca)^\op$.

By Proposition~\ref{Proposition-rep-mod}, we have equivalences
$\mod(\cb \otimes_k \ca) \simeq \rep(kQ^\op/I^\op, \linebreak
\mod\ca)$ and $\mod(\cb^\op \otimes_k \ca^\op) \simeq \rep(kQ/I,
\mod\ca^\op)$. Thus it suffices to prove that the duality $D$ above
induces a duality $D : \rep(kQ^\op/I^\op, \mod\ca) \rightarrow
\rep(kQ/I, \mod\ca^\op)$, or equivalently, a duality $D :
\rep(Q^\op_{I^\op}, \mod\ca) \rightarrow \rep(Q_I, \linebreak
\mod\ca^\op)$.

$$\xymatrix{ \Fun_k((|\oplus(\cb \otimes_k \ca))^\op,\mod k)
\ar[r]^-{D} & \Fun_k(|\oplus(\cb \otimes_k \ca),\mod k) \\
\mod(|\oplus(\cb \otimes_k \ca)) \ar@{^{(}->}[u] \ar@{.>}[r]^-{D}
\ar[d]^{\simeq} &
\mod(|\oplus(\cb \otimes_k \ca))^\op  \ar@{^{(}->}[u] \ar[d]^{\simeq} \\
\mod(\cb \otimes_k \ca) \ar@{.>}[r]^-{D} & \mod(\cb
\otimes_k \ca)^\op \\
\rep(kQ^\op/I^\op, \mod\ca) \ar[u]_{\simeq} \ar@{.>}[r]^-{D} &
\rep(kQ/I, \mod\ca^\op)  \ar[u]_{\simeq} \\
\rep(Q^\op_{I^\op}, \mod\ca) \ar[u]_{\simeq} \ar@{.>}[r]^-{D} &
\rep(Q_I, \mod\ca^\op)  \ar[u]_{\simeq} }$$

Since $\ca$ is dualizing, the duality $D : \Fun_k(\ca^\op, \mod k)
\rightarrow \Fun_k(\ca, \mod k)$ induces a duality $D : \mod\ca
\rightarrow \mod\ca^\op$. In the last row of the above diagram, the
functor $D : \rep(Q^\op_{I^\op}, \mod\ca) \rightarrow \rep(Q_I,
\mod\ca^\op)$ is defined as follows: For any $R \in
\rep(Q^\op_{I^\op}, \mod\ca)$, $D(R)(v)= D(R(v)) \in \mod\ca^\op$
for all $v \in Q_0$ and $D(R)(a) = D(R(a))$ for all $a \in Q_1$. For
any morphism $\phi : R \rightarrow R'$ in $\rep(Q^\op_{I^\op},
\mod\ca)$, $D(\phi) : D(R') \rightarrow D(R)$ is the morphism in
$\rep(Q_I, \mod\ca^\op)$ given by $D(\phi)_v=D(\phi_v)$ for all $v
\in Q_0$. The duality $D : \mod\ca \rightarrow \mod\ca^\op$ implies
that $D : \rep(Q^\op_{I^\op}, \mod\ca) \rightarrow \rep(Q_I,
\mod\ca^\op)$ is a duality.

(2) It follows from (1) and \cite[Proposition 2.6]{AusRei74} that
$\mod(|\oplus(\cb \otimes_k \ca))$ is an abelian dualizing
$k$-category.  Thus $\mod(\cb \otimes_k \ca) \simeq \mod(|\oplus(\cb
\otimes_k \ca))$ is also an abelian dualizing $k$-category.

(3) It follows from (1) and \cite[Theorem 7.1.3]{Rei82} that
$\mod(|\oplus(\cb \otimes_k \ca))$ has almost split sequences. Thus
$\mod(\cb \otimes_k \ca) \simeq \mod(|\oplus(\cb \otimes_k \ca))$
has almost split sequences.
\end{proof}

\begin{remark}{\rm By Theorem~\ref{Theorem-dualizing} (1),
we can construct a large number of new dualizing $k$-categories from
a given dualizing $k$-category. In practice, it is more convenient
to apply the conclusion that $\rep(kQ/I, \mod\ca)$ is a dualizing
$k$-category and has almost split sequences which can be obtained
from Proposition~\ref{Proposition-rep-mod} and
Theorem~\ref{Theorem-dualizing} (2) and (3). This is a
generalization of \cite[Theorem 4.3 and Corollary 4.4]{BauSouZua05},
which will be clear in the next section. In the case that $Q$ is
just one vertex and has no any arrows, it is nothing but
\cite[Proposition 2.6]{AusRei74}. }\end{remark}

\section{Applications}

In this section, we will apply our main theorem to show that the
categories of all kinds of complexes have almost split sequences.

\subsection{Categories of $n$-complexes}

Let $\ca$ be an additive category and $n \geq 2$. An {\it
$n$-complex} $X$ on $\ca$ is a collection $(X^i,d^i_X)_{i \in
\mathbb{Z}}$ with $X^i \in \ca$ and $d^i_X \in \ca(X^i,X^{i+1})$
such that $d^{i+n-1}_X \cdots d^{i+1}_Xd^i_X = 0$ for all $i \in
\mathbb{Z}$. An $n$-complex $X = (X^i,d^i_X)_{i \in \mathbb{Z}}$ on
$\ca$ can be visualized as the following diagram:
$$\xymatrix{\cdots \ar[r]^-{d^{i-2}_X}  & X^{i-1}
\ar[r]^-{d^{i-1}_X} & X^i \ar[r]^-{d^i_X} & X^{i+1}
\ar[r]^-{d^{i+1}_X} & \cdots . }$$ A {\it morphism} $f$ from an
$n$-complex $X = (X^i,d^i_X)_{i \in \mathbb{Z}}$ on $\ca$ to an
$n$-complex $Y = (Y^i,d^i_Y)_{i \in \mathbb{Z}}$ on $\ca$ is a
collection $(f^i)_{i \in \mathbb{Z}}$ with $f^i \in \ca(X^i,Y^i)$
such that $f^{i+1}d^i_X=d^i_Yf^i$ for all $i \in \mathbb{Z}$, i.e.,
the following diagram is commutative:
$$\xymatrix{ \cdots \ar[r]^-{d^{i-2}_X}  & X^{i-1}
\ar[r]^-{d^{i-1}_X} \ar[d]^-{f^{i-1}} & X^i \ar[r]^-{d^i_X}
\ar[d]^-{f^i} & X^{i+1} \ar[r]^-{d^{i+1}_X} \ar[d]^-{f^{i+1}} &
\cdots \\ \cdots \ar[r]^-{d^{i-2}_Y} & Y^{i-1} \ar[r]^-{d^{i-1}_Y} &
Y^i \ar[r]^-{d^i_Y} & Y^{i+1} \ar[r]^-{d^{i+1}_Y} & \cdots }$$ The
{\it composition} of morphisms $f=(f^i)_{i \in \mathbb{Z}} : X
\rightarrow Y$ and $g=(g^i)_{i \in \mathbb{Z}} : Y \rightarrow Z$ is
$gf := (g^if^i)_{i \in \mathbb{Z}} : X \rightarrow Z$. All
$n$-complexes on $\ca$ and all morphisms between them form the {\it
category of $n$-complexes on $\ca$}, denoted by $C_n(\ca)$.

Let $(\ca , \ce)$ be an exact category and $\ce_n$ the class of
composable morphisms $\xymatrix{ X \ar[r]^-{f} & Y \ar[r]^-{g} & Z
}$ in $C_n(\ca)$ such that $\xymatrix{ X^i \ar[r]^-{f^i} & Y^i
\ar[r]^-{g^i} & Z^i }$ is a conflation in $\ce$ for all $i \in
\mathbb{Z}$. Then $(C_n(\ca),\ce_n)$ is an exact category. An
$n$-complex $X = (X^i,d^i_X)_{i \in \mathbb{Z}}$ on $\ca$ is said to
be {\it bounded} if $X^i=0$ for all but finitely many $i \in
\mathbb{Z}$. Denoted by $C^b_n(\ca)$ the full subcategory of
$C_n(\ca)$ consisting of all bounded $n$-complexes on $\ca$. Let
$\ce^b_n$ be the class of the composable morphisms in both
$C^b_n(\ca)$ and $\ce_n$. Then $(C^b_n(\ca),\ce^b_n)$ is a full
exact subcategory of $(C_n(\ca),\ce_n)$.

The following result is a generalization of \cite[Theorem
4.3]{BauSouZua05}:

\begin{corollary} \label{Corollary-C^b_n-ass}
Let $\ca$ be a dualizing $k$-category, $\cf$ the natural exact
structure on the abelian category $\mod\ca$, and $n \geq 2$. Then
$(C^b_n(\mod\ca),\cf^b_n)$ has almost split sequences.
\end{corollary}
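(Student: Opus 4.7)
The plan is to realize $C^b_n(\mod\ca)$ as the category of support-finite representations of a suitable quiver with admissible relations, and then invoke Theorem~\ref{Theorem-dualizing} (3) together with Proposition~\ref{Proposition-rep-mod}.

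First, I take $Q$ to be the infinite linear quiver with vertex set $Q_0 = \mathbb{Z}$ and a single arrow $a_i : i \to i+1$ for each $i \in \mathbb{Z}$; this $Q$ is clearly locally finite. Let $I$ be the ideal of $kQ$ generated by the paths $a_{i+n-1} \cdots a_{i+1} a_i$ of length $n$, for all $i \in \mathbb{Z}$. Since $n \geq 2$ we have $I \subseteq k^2Q$, and for each vertex $v$ every path of length $\geq n$ with source or target $v$ contains one of these generators as a subpath, so $I$ is an admissible ideal generated by a set of paths. Setting $\cb := kQ/I$, a support-finite representation $R$ of $\cb$ in $\mod\ca$ is exactly the data of objects $X^i := R(i) \in \mod\ca$ with $X^i = 0$ for all but finitely many $i$, together with morphisms $d^i_X := R(a_i) : X^i \to X^{i+1}$ satisfying $d^{i+n-1}_X \cdots d^{i+1}_X d^i_X = 0$. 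This gives a natural equivalence $\rep(\cb, \mod\ca) \simeq C^b_n(\mod\ca)$.

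Next, by Proposition~\ref{Proposition-rep-mod} we have $\rep(\cb, \mod\ca) \simeq \mod(\cb^\op \otimes_k \ca)$. Since $\cb^\op \cong kQ^\op/I^\op$ again satisfies the hypotheses of Theorem~\ref{Theorem-dualizing} (the opposite of a locally finite quiver is locally finite, and $I^\op$ is admissible and generated by paths), part (3) of that theorem applied with $\cb$ replaced by $\cb^\op$ yields that $\mod(\cb^\op \otimes_k \ca)$ has almost split sequences. Transferring along the equivalences, $C^b_n(\mod\ca)$ is an abelian $k$-category admitting almost split sequences in its abelian exact structure.

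The step I expect to require the most care is matching exact structures: the almost split sequences produced by the theorem live in the abelian structure on $\mod(\cb^\op \otimes_k \ca)$, whereas the corollary concerns $\cf^b_n$, whose conflations are the termwise short exact sequences in $\mod\ca$. The remark following Proposition~\ref{Proposition-rep-mod} identifies the abelian exact structure on $\rep(\cb, \mod\ca) \simeq \mod(\cb^\op \otimes_k \ca)$ with termwise short exact sequences, and these correspond, under $\rep(\cb, \mod\ca) \simeq C^b_n(\mod\ca)$, precisely to the conflations of $\cf^b_n$. Hence the two notions of almost split sequence coincide and the corollary follows.
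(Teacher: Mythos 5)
Your proposal is correct and follows essentially the same approach as the paper's own proof: the same infinite linear quiver $Q$ with vertex set $\mathbb{Z}$, the same ideal $I$ generated by all paths of length $n$, and the same chain of equivalences $C^b_n(\mod\ca)\simeq\rep(\cb,\mod\ca)\simeq\mod(\cb^\op\otimes_k\ca)$ combined with Proposition~\ref{Proposition-rep-mod} and Theorem~\ref{Theorem-dualizing}(3). Your write-up is more careful than the paper's terse proof in explicitly verifying admissibility of $I$ and in addressing the identification of exact structures, but the underlying argument is identical.
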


\begin{proof} Let $Q$ be
the quiver with vertices $i$ and arrows $a_i : i \rightarrow i+1$
for all $i \in \mathbb{Z}$, i.e.,
$$\unitlength=1mm \begin{picture}(70,10)
\put(0,4){$\cdots$} \multiput(18,4)(14,0){3}{$\bullet$}
\put(6,5){\vector(1,0){10}} \put(8,7){$a_{-2}$}
\put(21,5){\vector(1,0){10}} \put(22,7){$a_{-1}$}
\put(35,5){\vector(1,0){10}} \put(37,7){$a_0$}
\put(49,5){\vector(1,0){10}} \put(51,7){$a_1$} \put(60,4){$\cdots$}
\\  \put(16,0){$-1$} \put(32,0){0} \put(46,0){1} \put(68,4){,} \end{picture}$$
$I$ the ideal of $kQ$ generated by all paths of length $n$, and $\cb
:= kQ/I$. By Proposition~\ref{Proposition-rep-mod} and
Theorem~\ref{Theorem-dualizing}, we know $C^b_n(\mod\ca) \simeq
\rep(Q_I,\mod\ca) \simeq \rep(\cb,\mod\ca) \simeq \mod(\cb^\op
\otimes_k \ca)$ has almost split sequences.
\end{proof}

Let $\ca$ be a dualizing $k$-category and $\ce$ the {\it trivial
exact structure} on $\ca$, i.e., $\ce$ consists of all split short
exact sequences $\xymatrix{ X \ar[r]^-{f} & Y \ar[r]^-{g} & Z }$ in
$\ca$. Denote by $\proj\ca$ the full subcategory of $\mod\ca$
consisting of all projective $\ca$-modules, i.e., all representable
functors in $\Mod\ca$. Note that the exact category $(\ca,\ce)$ is
equivalent to the exact category $(\proj\ca,\cf | \proj\ca)$.
Indeed, $\ca \rightarrow \proj\ca, A \mapsto \ca(-,A)$, is an
equivalence functor.

For any $M \in \mod\ca$ and $j \in \mathbb{Z}$, denote by $J_j(M)$
the $n$-complex $X=(X^i,d^i_X)_{i \in \mathbb{Z}}$ where $X^i := M$
for all $i \in [j,j+n-1]$ and $X^i := 0$ otherwise, and $d^i_X :=
1_M$ for all $i \in [j,j+n-2]$ and $d^i_X := 0$ otherwise.

\begin{corollary} Let $\ca$ be a dualizing $k$-category
with trivial exact structure $\ce$ and $\gl(\mod\ca) < \infty$, and
$n \geq 2$. Then the exact category $(C^b_n(\ca),\ce^b_n)$ has
almost split sequences.
\end{corollary}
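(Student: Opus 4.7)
\emph{Proof plan.} The strategy is to embed $(C^b_n(\ca),\ce^b_n)$ as a functorially finite exact subcategory of $(C^b_n(\mod\ca),\cf^b_n)$ and to apply the Auslander--Smal{\o} theorem recorded in \S 2.2, together with the previous corollary.

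First I would upgrade the already noted equivalence $\ca \simeq \proj\ca$ (via the Yoneda embedding $A \mapsto \ca(-,A)$) to an equivalence of exact $k$-categories $(\ca,\ce) \simeq (\proj\ca,\cf|_{\proj\ca})$. Since $\ce$ is the trivial exact structure on $\ca$, this amounts to checking that every short exact sequence in $\mod\ca$ whose three terms are projective splits, which is immediate as the right-hand term is projective. Applying $C^b_n(-)$ componentwise then yields an exact equivalence
$$(C^b_n(\ca),\ce^b_n) \;\simeq\; (C^b_n(\proj\ca),\cf^b_n|_{C^b_n(\proj\ca)}),$$
so it suffices to establish almost split sequences for the right-hand category.

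By Corollary~\ref{Corollary-C^b_n-ass}, $(C^b_n(\mod\ca),\cf^b_n)$ is a Krull--Schmidt exact $k$-category that already has almost split sequences. To invoke \cite[Theorem 2.4]{AusSma81} and deduce that the induced structure on the subcategory $C^b_n(\proj\ca)$ inherits almost split sequences, I must verify three properties of $C^b_n(\proj\ca)$ inside $C^b_n(\mod\ca)$: closure under direct summands (immediate since $\proj\ca$ is closed under summands), closure under $\cf^b_n$-conflations (immediate degree-wise since any short exact sequence in $\mod\ca$ with both outer terms projective splits, forcing the middle term to be projective), and functorial finiteness.

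The main obstacle is functorial finiteness, and this is precisely the step that consumes the hypothesis $\gl(\mod\ca) < \infty$. Given $M \in C^b_n(\mod\ca)$, supported in degrees $[a,b]$, I would construct a right $C^b_n(\proj\ca)$-approximation as follows: for each $i\in[a,b]$ choose a projective resolution of $M^i$ in $\mod\ca$ of length at most $d:=\gl(\mod\ca)$, and assemble these vertically along the horizontal differentials of $M$ into a Cartan--Eilenberg-type double $n$-complex of projectives, whose totalization is a bounded $n$-complex $P \in C^b_n(\proj\ca)$ (supported in degrees $[a-d,b]$) equipped with an augmentation $P \to M$. A diagram chase, using that any morphism from a bounded $n$-complex of projectives into $M$ can be lifted degree-wise through the projective resolutions, shows this augmentation is a right approximation. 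The dual construction, using that $\mod\ca$ has enough injectives and that $\gl(\mod\ca^{\op}) = \gl(\mod\ca) < \infty$ (since $\ca$ is dualizing), produces left $C^b_n(\proj\ca)$-approximations. The only nontrivial bookkeeping is arranging the differentials in the totalization so that the $n$-complex relation $d^{i+n-1}\cdots d^i = 0$ is preserved, which is the $n$-complex analogue of sign conventions in the classical Cartan--Eilenberg construction. Once functorial finiteness is established, \cite[Theorem 2.4]{AusSma81} gives almost split sequences in $(C^b_n(\proj\ca),\cf^b_n|)$, which transport back along the equivalence of the first paragraph to yield almost split sequences in $(C^b_n(\ca),\ce^b_n)$.
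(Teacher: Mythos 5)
Your overall framing — reduce to $(C^b_n(\proj\ca),\cf^b_n\vert)$, verify closure under summands and conflations, establish functorial finiteness, and invoke Auslander–Smal{\o} — agrees with the paper's strategy, and the first two verifications are correct. The gap is in the construction of the right $C^b_n(\proj\ca)$-approximation. You claim that the augmentation $P\to M$ from a Cartan--Eilenberg-type totalization of bounded projective resolutions is itself a right approximation, justified by ``lifting degree-wise through the projective resolutions.'' But a degree-wise lift only produces a family of maps $h^i:Z'^i\to P^i$ with $q^ih^i=g'^i$; it does not produce a morphism of $n$-complexes, and the obstruction to adjusting the $h^i$ into a genuine chain map is nontrivial. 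For ordinary $2$-complexes one can run an obstruction argument using the acyclicity of the internal Hom-complex $\Hom^\bullet(Z',\Ker q)$, since $\partial^2=0$ makes the obstruction a cocycle; but for $n\geq 3$ the internal Hom is an $n$-complex (the operator $\partial f=d_Pf-fd_{Z'}$ satisfies $\partial^n=0$, not $\partial^2=0$), the obstruction is no longer a $2$-cocycle, and the argument does not transfer. This is precisely why the paper does \emph{not} use the augmentation alone: it builds the approximation as $g=(q,r):Q\oplus(\oplus_jJ_j(P_j))\to Z$, where $q:Q\to Z$ is a homotopically projective resolution (invoking \cite[Proposition 41]{IyaKatMiy13}, which is where $\gl(\mod\ca)<\infty$ enters) and the extra ``contractible'' summands $J_j(P_j)$ exist exactly to absorb the null-homotopy $l$ in the decomposition $g'=qh+l$, which is all that the derived-category lifting gives. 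Your totalization also glosses over whether the $n$-complex relation $d^{i+n-1}\cdots d^i=0$ can even be arranged in the totalized object for $n\geq 3$; the paper sidesteps this by citing the established resolution result rather than building one.

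A secondary issue: your description of the left approximations via ``$\mod\ca$ has enough injectives'' does not directly produce an object of $C^b_n(\proj\ca)$, since injective resolutions live in $C^b_n(\inj\ca)$. The cleaner route, which is what the paper does, is to apply the right-approximation argument to the dualizing category $\ca^{\op}$ (using $\gl(\mod\ca^{\op})=\gl(\mod\ca)<\infty$) and transport along the anti-equivalence $(C^b_n(\ca),\ce^b_n)\simeq(C^b_n(\ca^{\op}),(\ce^{\op})^b_n)^{\op}$.
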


\begin{proof} First of all, we show that $(C^b_n(\ca),\ce^b_n)$
has right almost split morphisms. We have known $(\ca,\ce) \simeq
(\proj\ca,\cf | \proj\ca)$. Thus $(C^b_n(\ca),\ce^b_n) \simeq
(C^b_n(\proj\ca),(\cf | \proj\ca)^b_n)$. So it is enough to prove
that the exact category $(C^b_n(\proj\ca),(\cf | \proj\ca)^b_n) =
(C^b_n(\proj\ca),\cf^b_n | C^b_n(\proj\ca))$ has right almost split
morphisms. For this, it suffices to show that $C^b_n(\proj\ca)$ is a
contravariantly finite subcategory of $C^b_n(\mod\ca)$ closed under
$\cf^b_n$-extensions and direct summands. Obviously, we need only to
prove that $C^b_n(\proj\ca)$ is contravariantly finite over
$C^b_n(\mod\ca)$, i.e., for any $Z \in C^b_n(\mod\ca)$, there exists
a right $C^b_n(\proj\ca)$-approximation $g : Y \rightarrow Z$.

By \cite[Page 6, Formula (17)]{IyaKatMiy13}, there is an epimorphism
$p : \oplus_{j \in \mathbb{Z}}J_j(Z^j) \rightarrow Z$ in
$C^b_n(\mod\ca)$. Since $\mod\ca$ has enough projective objects, for
any $j \in \mathbb{Z}$, we have an epimorphism $P_j
\twoheadrightarrow Z^j$ in $\mod\ca$ with $P^j \in \proj\ca$ where
we take $P_j=0$ in the case of $Z^j=0$. Thus there is an epimorphism
$p' : \oplus_{j \in \mathbb{Z}}J_j(P_j) \rightarrow \oplus_{j \in
\mathbb{Z}}J_j(Z^j)$ in $C^b_n(\mod\ca)$, and further a morphism
$$\xymatrix{ r := pp' :
\oplus_{j \in \mathbb{Z}}J_j(P_j) \ar@{->>}[r] & \oplus_{j \in
\mathbb{Z}}J_j(Z^j) \ar@{->>}[r] & Z }$$ in $C^b_n(\mod\ca)$.

By the assumption $\gl(\mod\ca) < \infty$ and \cite[Proposition
41]{IyaKatMiy13}, $Z$ admits a homotopically projective resolution
$q : Q \rightarrow Z$ with $Q \in C^b_n(\proj\ca)$. Now we check
$g:=(q,r) : Y:= Q \oplus (\oplus_{j \in \mathbb{Z}}J_j(P_j))
\rightarrow Z$ is a right $C^b_n(\proj\ca)$-approximation of $Z$. In
another words, for any morphism $g' : Z' \rightarrow Z$ in
$C^b_n(\mod\ca)$ with $Z' \in C^b_n(\proj\ca)$, we need to show that
there is a morphism $g'' : Z' \rightarrow Y$ in $C^b_n(\proj\ca)$
such that $g'=gg''$.

Since $q$ is a quasi-isomorphism, $q^{-1}g' : Z' \rightarrow Q$ is a
morphism in the bounded derived category $D^b_n(\mod\ca)$. Since $Z'
\in C^b_n(\proj\ca)$ is homotopically projective, there is a
morphism $h : Z' \rightarrow Q$ in $C^b_n(\proj\ca)$ such that $h =
q^{-1}g'$ in $D^b_n(\mod\ca)$.
$$\xymatrix{ & Z' \ar[d]^-{g'} \ar[dl]_-h \\ Q \ar[r]^-q & Z }$$
Thus $g'=qh$ in $D^b_n(\mod\ca)$, and further in the bounded
homotopy category $K^b_n(\mod\ca)$ due to $Z' \in C^b_n(\proj\ca)$.
Hence $g'=qh+l$ in $C^b_n(\mod\ca)$ for some null-homotopy $l : Z'
\rightarrow Z$ in $C^b_n(\mod\ca)$. Since $l$ is a null-homotopy,
$l$ is factored through $p$ (cf. \cite[Proof of Theorem
19]{IyaKatMiy13}), say $l=pl'$. Since $Z' \in C^b_n(\proj\ca)$, each
component of $Z'$ is projective. Thus $l'$ is factored through $p'$,
say $l'=p'l''$. Hence $l$ is factored through $r$.
$$\xymatrix{ & & Z' \ar[dll]^-{l''} \ar[dl]^-{l'} \ar[d]^-l \\
\oplus_{j \in \mathbb{Z}}J_j(P_j) \ar@{->>}[r]_-{p'} & \oplus_{j \in
\mathbb{Z}}J_j(Z^j) \ar@{->>}[r]_-p & Z }$$ So we get $$g' = qh+l =
qh+rl'' = (q,r) \left(\begin{array}{c} h \\ l''
\end{array}\right) = gg''$$ where $g'' := \left(\begin{array}{c}
h \\ l'' \end{array}\right) : Z' \rightarrow Y = Q \oplus (\oplus_{j
\in \mathbb{Z}}J_j(P_j))$. Thus $g : Y \rightarrow Z$ is a right
$C^b_n(\proj\ca)$-approximation of $Z$. Therefore, $C^b_n(\proj\ca)$
is contravariantly finite over $C^b_n(\mod\ca)$.

Since $C^b_n(\ca)$ is Krull-Schmidt and $(C^b_n(\ca),\ce^b_n)$ has
right almost split morphisms, $(C^b_n(\ca),\ce^b_n)$ has minimal
right almost split morphisms. Furthermore, one can prove that if $Z
\in C^b_n(\ca)$ is indecomposable and non-$\ce^b_n$-projective then
there is an almost split sequence in $(C^b_n(\ca),\ce^b_n)$ ending
in $Z$.

Since $\ca$ is dualizing, $\mod\ca^\op \simeq (\mod\ca)^\op$. Thus
$\gl(\mod\ca^\op) = \gl(\mod\ca) < \infty$ (Ref. \cite[Page
42]{Mit72}). Applying the obtained result to dualizing $k$-variety
$\ca^\op$, we know that $(C^b_n(\ca^\op),(\ce^\op)^b_n)$ has right
almost split morphisms and if $Z \in C^b_n(\ca^\op)$ is
indecomposable and non-$(\ce^\op)^b_n$-projective then there is an
almost split sequence in $(C^b_n(\ca^\op),(\ce^\op)^b_n)$ ending in
$Z$. Since $(C^b_n(\ca),\ce^b_n) \simeq
(C^b_n(\ca^\op),(\ce^\op)^b_n)^\op$, $(C^b_n(\ca),\ce^b_n)$ has left
almost split morphisms and if $X \in C^b_n(\ca)$ is indecomposable
and non-$\ce^b_n$-injective then there is an almost split sequence
in $(C^b_n(\ca),\ce^b_n)$ starting in $X$. Therefore,
$(C^b_n(\ca),\ce^b_n)$ has almost split sequences.
\end{proof}

For $m \in \mathbb{N}$, denote by $C^m_n(\ca)$ the full subcategory
of $C_n(\ca)$ consisting of all $n$-complexes $X=(X^i,d^i_X)_{i \in
\mathbb{Z}}$ on $\ca$ with amplitude in the interval $[1,m]$, i.e.,
$X^i=0$ for all $i \notin \{1,2, \cdots, m\}$. Let $\ce^m_n$ be the
class of the composable morphisms in both $C^m_n(\ca)$ and $\ce_n$.
Then $(C^m_n(\ca),\ce^m_n)$ is a full exact subcategory of
$(C^b_n(\ca),\ce^b_n)$.

The following result is a generalization of \cite[Corollary
4.4]{BauSouZua05}:

\begin{corollary} Let $\ca$ be a dualizing $k$-category, $\cf$ the
natural exact structure on the abelian category $\mod\ca$, $m \geq
1$, and $n \geq 2$. Then $(C^m_n(\mod\ca),\cf^m_n)$ has almost split
sequences.
\end{corollary}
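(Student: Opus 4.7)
The plan is to mimic the proof of Corollary~\ref{Corollary-C^b_n-ass}, but with the $\mathbb{Z}$-indexed quiver replaced by a finite linear quiver of length $m$. I would take $Q$ to be the quiver with vertex set $\{1, 2, \ldots, m\}$ and arrows $a_i : i \rightarrow i+1$ for $i = 1, \ldots, m-1$, and let $I$ be the ideal of $kQ$ generated by all paths of length $n$ (which is the zero ideal when $m < n$). Since $Q$ is finite it is locally finite; and $I$ is admissible, being generated by paths of length $n \geq 2$ in a finite quiver where every path of length $\geq n$ contains a subpath of length $n$ and hence lies in $I$ (the required $l_v$ can just be taken to be $m$ at every vertex). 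Set $\cb := kQ/I$.

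The key observation is that one has an equivalence $\rep(\cb, \mod\ca) \simeq C^m_n(\mod\ca)$. Indeed, a support-finite representation of $Q_I$ in $\mod\ca$ is precisely a diagram $R(1) \to R(2) \to \cdots \to R(m)$ in $\mod\ca$ such that every composition of $n$ consecutive morphisms vanishes; extending by zero outside $[1,m]$ gives an object of $C^m_n(\mod\ca)$, and this assignment is easily seen to be a functorial equivalence. Since $Q$ is finite, every representation in $\Rep(\cb,\mod\ca)$ is automatically support-finite, so in fact $\Rep(\cb, \mod\ca) \simeq C^m_n(\mod\ca)$.

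With these identifications in place, Proposition~\ref{Proposition-rep-mod} and Theorem~\ref{Theorem-dualizing}(3) give that $\rep(\cb, \mod\ca) \simeq \mod(\cb^\op \otimes_k \ca)$ is a dualizing $k$-category with almost split sequences. Transporting almost split sequences along the equivalence yields the desired conclusion for $(C^m_n(\mod\ca), \cf^m_n)$.

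The only real obstacle is to verify that the equivalence $\rep(\cb, \mod\ca) \simeq C^m_n(\mod\ca)$ carries the natural exact structure on the left precisely to $\cf^m_n$ on the right, so that almost split conflations correspond; but both exact structures are defined by componentwise (respectively, pointwise) short exact sequences in $\mod\ca$, so this compatibility is tautological. Once this is observed, the argument is essentially identical to that of Corollary~\ref{Corollary-C^b_n-ass}, with the finite quiver $1 \to 2 \to \cdots \to m$ in place of the doubly-infinite linear quiver.
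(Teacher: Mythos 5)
Your proposal is correct and follows essentially the same route as the paper's own proof: you take the finite linear quiver $1\to 2\to\cdots\to m$ with ideal generated by all paths of length $n$, identify $\rep(kQ/I,\mod\ca)$ with $C^m_n(\mod\ca)$, and invoke Proposition~\ref{Proposition-rep-mod} together with Theorem~\ref{Theorem-dualizing}. Your extra checks (admissibility when $m<n$ via the vacuous bound $l_v=m$, and compatibility of the exact structures) are harmless elaborations of points the paper leaves implicit.
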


\begin{proof} Let $Q$ be the quiver with
vertices $i$ and arrows $a_i : i \rightarrow i+1$ for all $1 \leq i
\leq m-1$, i.e.,
$$\unitlength=1mm \begin{picture}(60,10)
\multiput(0,4)(14,0){5}{$\bullet$} \put(3,5){\vector(1,0){10}}
\put(5,7){$a_1$} \put(17,5){\vector(1,0){10}} \put(19,7){$a_2$}
\put(45,5){\vector(1,0){10}} \put(34,4){$\cdots$} \put(47,7){$a_m$}
\\  \put(0,0){1} \put(14,0){2} \put(28,0){3} \put(38,0){$m-1$}
\put(56,0){$m$} \put(64,4){,} \end{picture}$$ $I$ the ideal of $kQ$
generated by all paths of length $n$, and $\cb := kQ/I$. By
Proposition~\ref{Proposition-rep-mod} and
Theorem~\ref{Theorem-dualizing}, we know $C^m_n(\mod\ca) \simeq
\rep(Q_I,\mod\ca) \simeq \rep(\cb,\mod\ca) \simeq \mod(\cb^\op
\otimes_k \ca)$ has almost split sequences.
\end{proof}

The following result is a generalization of \cite[Theorem
4.5]{BauSouZua05}:

\begin{corollary} Let $\ca$ be a dualizing $k$-category
with trivial exact structure $\ce$, $m \geq 1$, and $n \geq 2$. Then
the exact category $(C^m_n(\ca),\ce^m_n)$ has almost split
sequences.
\end{corollary}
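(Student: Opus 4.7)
My plan is to mirror the proof of the previous corollary (for $(C^b_n(\ca), \ce^b_n)$), with the amplitude restriction $[1,m]$ replacing the finite global dimension hypothesis. First, I would use the Yoneda equivalence $\ca \simeq \proj\ca$ (sending $A \mapsto \ca(-,A)$) to identify $(C^m_n(\ca), \ce^m_n)$ with $(C^m_n(\proj\ca), \cf^m_n \cap C^m_n(\proj\ca))$, reducing the problem to showing the latter has almost split sequences. The previous corollary already gives that $(C^m_n(\mod\ca), \cf^m_n)$ is a Krull-Schmidt exact category with almost split sequences, so I would apply the Auslander-Smal{\o} transfer principle \cite[Theorem 2.4]{AusSma81}. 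The remaining task is then to verify that $C^m_n(\proj\ca)$ is a functorially finite full $k$-subcategory of $C^m_n(\mod\ca)$ closed under $\cf^m_n$-conflations and direct summands.

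The two closure conditions are elementary. Closure under direct summands is immediate from Krull-Schmidt of both categories. For closure under conflations: if $\xymatrix{X \ar[r] & Y \ar[r] & Z}$ is a $\cf^m_n$-conflation with $X,Z \in C^m_n(\proj\ca)$, then each $\xymatrix{X^i \ar[r] & Y^i \ar[r] & Z^i}$ is a short exact sequence in $\mod\ca$ with $X^i,Z^i$ projective, which splits, so $Y^i \in \proj\ca$ and $Y \in C^m_n(\proj\ca)$.

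For functorial finiteness, I would establish contravariant finiteness by direct construction, then invoke symmetry for covariant finiteness. Given $Z \in C^m_n(\mod\ca)$, choose projective covers $p^i \colon P^i \twoheadrightarrow Z^i$ in $\mod\ca$ for each $i \in [1,m]$, and assemble these together with the projective generators $f^*_v(\ca(-,A))$ from Lemma~\ref{Lemma-Projective} into an object $Y \in C^m_n(\proj\ca)$ together with a morphism $g \colon Y \to Z$ such that every map $Y' \to Z$ with $Y' \in C^m_n(\proj\ca)$ factors through $g$. Since $Q_I$ has only $m$ vertices and all non-zero paths have length at most $n-1$, the construction stays within the amplitude $[1,m]$ automatically, so no homotopically projective resolution (as needed in the $C^b_n$ case) is required and the hypothesis $\gl(\mod\ca)<\infty$ can be dropped. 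Covariant finiteness is then obtained by applying the contravariant argument to the dualizing $k$-category $\ca^\op$ via the identification $(C^m_n(\mod\ca))^\op \simeq C^m_n(\mod\ca^\op)$, after which \cite[Theorem 2.4]{AusSma81} completes the proof.

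The main obstacle is the explicit construction of the right approximation $g \colon Y \to Z$. The naive candidate $\bigoplus_{i \in [1,m]} f^*_i(P^i)$ is in general not a right approximation; this already fails in small examples such as $\ca = \proj(k[x]/x^2)$ with $m = n = 2$, where a morphism $\xymatrix{\Lambda \ar[r]^-x & \Lambda} \to \xymatrix{\Lambda \ar[r]^-p & k}$ with vanishing first component but non-vanishing second component cannot be lifted through $\bigoplus_i f^*_i(P^i)$. One must augment $Y$ with additional interval-type summands (related to the indecomposable injective representations of the Nakayama algebra $\cb = kQ/I$) that absorb such morphisms. Proving that a finite augmentation suffices uniformly in $Y'$, and that the resulting $g$ satisfies the factorization property, is the technical heart of the proof.
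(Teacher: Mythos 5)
Your overall framework is right: reduce to showing $C^m_n(\proj\ca)$ is contravariantly finite in $C^m_n(\mod\ca)$ (plus closure under conflations and summands, which you handle correctly), then get the covariant side by applying the result to $\ca^\op$ and invoke \cite[Theorem 2.4]{AusSma81}. You also correctly notice that the finite global dimension hypothesis present in the $C^b_n$ corollary is not needed here, and you correctly observe that the naive candidate $\bigoplus_i f^*_i(P^i)$ (equivalently, a sum of interval objects $J_j(P_j)$) is not by itself a right approximation. But at exactly this point you stop and say that the correct construction of $Y$ ``is the technical heart of the proof'' without carrying it out, so the proposal has a genuine gap.

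Moreover, the reason you give for why the global-dimension hypothesis can be dropped --- that ``no homotopically projective resolution is required'' --- is wrong, and this is precisely why you cannot see how to close the gap. The paper still invokes \cite[Proposition 41]{IyaKatMiy13} to get a homotopically projective resolution $q\colon Q \to Z$; the point is that since $Z$ is concentrated in $[1,m]$ one gets $Q \in C^{\leq m}_n(\proj\ca)$ (bounded above with amplitude $\leq m$, possibly unbounded below) \emph{without} any finite global dimension assumption. The approximation is then $Y := \tau_{\geq 1}(Q) \oplus \bigl(\oplus^{m-n+1}_{j=1}J_j(P_j)\bigr)$ with $g = (\tau_{\geq 1}(q), r)$, where $\tau_{\geq 1}$ is the hard truncation cutting $Q$ down to amplitude $[1,m]$. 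Given $g'\colon Z' \to Z$ with $Z'$ projective, one writes $g' = qh + l$ with $h$ coming from the derived category (using homotopical projectivity of $Z'$) and $l$ null-homotopic; the truncated resolution $\tau_{\geq 1}(Q)$ absorbs the $qh$ part, while the intervals $J_j(P_j)$ (with $j \leq m-n+1$, so they fit in $[1,m]$) absorb the null-homotopic part $l$. Your counterexample with $k[x]/(x^2)$ shows exactly that the interval summands alone are not enough --- what is missing is the truncated resolution $\tau_{\geq 1}(Q)$, not some further ``augmentation by injectives of $\cb$.'' Without that component, the factorization of the non-null-homotopic part of $g'$ does not go through.
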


\begin{proof} First of all, we show that $(C^m_n(\ca),\ce^m_n)$
has right almost split morphisms. We have known $(\ca,\ce) \simeq
(\proj\ca,\cf | \proj\ca)$. Thus $(C^m_n(\ca),\ce^m_n) \simeq
(C^m_n(\proj\ca),(\cf | \proj\ca)^m_n)$. So it is enough to prove
that the exact category $(C^m_n(\proj\ca),(\cf | \proj\ca)^m_n)$ has
right almost split morphisms. For this, it suffices to show that
$C^m_n(\proj\ca)$ is a contravariantly finite subcategory of
$C^m_n(\mod\ca)$ closed under $\cf^m_n$-extensions and direct
summands. Obviously, we need only to prove that $C^m_n(\proj\ca)$ is
contravariantly finite over $C^m_n(\mod\ca)$, i.e., for any $Z \in
C^m_n(\mod\ca)$, there exists a right
$C^m_n(\proj\ca)$-approximation $g : Y \rightarrow Z$.

By \cite[Page 6, Formula (17)]{IyaKatMiy13}, there is an epimorphism
$p : \oplus^m_{j=1}J_j(Z^j) \rightarrow Z$ in $C^b_n(\mod\ca)$.
Since $\mod\ca$ has enough projective objects, for any $j \in
[1,m]$, we have an epimorphism $P_j \twoheadrightarrow Z^j$ in
$\mod\ca$ with $P^j \in \proj\ca$. Thus there is an epimorphism $p'
: \oplus^m_{j=1}J_j(P_j) \rightarrow \oplus^m_{j=1}J_j(Z^j)$ in
$C^b_n(\mod\ca)$. Compose these morphisms with the natural injection
$p'' : \oplus^{m-n+1}_{j=1}J_j(P_j) \hookrightarrow
\oplus^m_{j=1}J_j(P_j)$, we obtain a morphism
$$\xymatrix{ r := pp'p'' : \oplus^{m-n+1}_{j=1}J_j(P_j) \ar@{^{(}->}[r] &
\oplus^m_{j=1}J_j(P_j) \ar@{->>}[r] & \oplus^m_{j=1}J_j(Z^j)
\ar@{->>}[r] & Z }$$ in $C^m_n(\mod\ca)$.

By \cite[Proposition 41]{IyaKatMiy13}, $Z$ admits a homotopically
projective resolution $q : Q \rightarrow Z$ with $Q \in C^{\leq
m}_n(\proj\ca)$. Now we check $g:=(\tau_{\geq 1}(q),r) : Y:=
\tau_{\geq 1}(Q) \oplus (\oplus^{m-n+1}_{j=1}J_j(P_j)) \rightarrow
Z$ is a right $C^m_n(\proj\ca)$-approximation of $Z$, where
$\tau_{\geq 1} : C_n(\Mod\ca) \rightarrow C^{\geq 1}_n(\Mod\ca)$ is
the hard truncation functor, i.e., for any $n$-complex $X$ and any
morphism of $n$-complexes $f$, $(\tau_{\geq 1}(X))^i := X^i$ for all
$i \geq 1$ and $(\tau_{\geq 1}(X))^i := 0$ otherwise and
$(\tau_{\geq 1}(f))^i := f^i$ for all $i \geq 1$ and $(\tau_{\geq
1}(f))^i := 0$ otherwise (Ref. \cite[Definition 21]{IyaKatMiy13}).
In another words, for any morphism $g' : Z' \rightarrow Z$ in
$C^m_n(\mod\ca)$ with $Z' \in C^m_n(\proj\ca)$, we need to show that
there is a morphism $g'' : Z' \rightarrow Y$ in $C^m_n(\proj\ca)$
such that $g'=gg''$.

Since $q$ is a quasi-isomorphism, $q^{-1}g' : Z' \rightarrow Q$ is a
morphism in the upper bounded derived category $D^-_n(\mod\ca)$.
Since $Z' \in C^m_n(\proj\ca)$ is homotopically projective, there is
a morphism $h : Z' \rightarrow Q$ in $C^-_n(\proj\ca)$ such that $h
= q^{-1}g'$ in $D^-_n(\mod\ca)$.
$$\xymatrix{ & Z' \ar[d]^-{g'} \ar[dl]_-h \\ Q \ar[r]^-q & Z }$$
Thus $g'=qh$ in $D^-_n(\mod\ca)$, and further in the upper bounded
homotopy category $K^-_n(\mod\ca)$ due to $Z' \in C^m_n(\proj\ca)$.
Hence $g'=qh+l$ in $C^m_n(\mod\ca)$ for some null-homotopy $l : Z'
\rightarrow Z$ in $C^m_n(\mod\ca)$. Since $l$ is a null-homotopy,
$l$ is factored through $p$ (Ref. \cite[Proof of Theorem
19]{IyaKatMiy13}), say $l=pl'$. Since $Z' \in C^m_n(\proj\ca)$, each
component of $Z'$ is projective. Thus $l'$ is factored through $p'$,
say $l'=p'l''$. It is easy to see that a morphism from $Z' \in
C^m_n(\proj\ca)$ to $J_j(P_j)$ must be zero for all $m-n+2 \leq j
\leq m$. Thus $l''$ is factored through $p''$, say $l''=p''l'''$.
Hence $l$ is factored through $r$.
$$\xymatrix{ & & & Z' \ar[dlll]_-{l'''} \ar[dll]^-{l''} \ar[dl]^-{l'} \ar[d]^-l \\
\oplus^{m-n+1}_{j=1}J_j(P_j) \ar@{^{(}->}[r]_-{p''} &
\oplus^m_{j=1}J_j(P_j) \ar@{->>}[r]_-{p'} & \oplus^m_{j=1}J_j(Z^j)
\ar@{->>}[r]_-p & Z }$$ Acting the hard truncation functor
$\tau_{\geq 1}$ on $g'=qh+l$, we get $$g' = \tau_{\geq
1}(q)\tau_{\geq 1}(h)+l = \tau_{\geq 1}(q)\tau_{\geq 1}(h)+rl''' =
(\tau_{\geq 1}(q),r) \left(\begin{array}{c} \tau_{\geq 1}(h)
\\ l'''
\end{array}\right) = gg''$$ where $g'' := \left(\begin{array}{c}
\tau_{\geq 1}(h) \\ l'''
\end{array}\right) : Z' \rightarrow Y = \tau_{\geq 1}(Q) \oplus
(\oplus^{m-n+1}_{j=1}J_j(P_j))$. So $g : Y \rightarrow Z$ is a right
$C^m_n(\proj\ca)$-approximation of $Z$. Therefore, $C^m_n(\proj\ca)$
is contravariantly finite over $C^m_n(\mod\ca)$.

Since $C^m_n(\ca)$ is Krull-Schmidt and $(C^m_n(\ca),\ce^m_n)$ has
right almost split morphisms, $(C^m_n(\ca),\ce^m_n)$ has minimal
right almost split morphisms. Furthermore, one can prove that if $Z
\in C^m_n(\ca)$ is indecomposable and non-$\ce^m_n$-projective then
there is an almost split sequence in $(C^m_n(\ca),\ce^m_n)$ ending
in $Z$.

Applying the obtained result to dualizing $k$-variety $\ca^\op$, we
know that $(C^m_n(\ca^\op),(\ce^\op)^m_n)$ has right almost split
morphisms and if $Z \in C^m_n(\ca^\op)$ is indecomposable and
non-$(\ce^\op)^m_n$-projective then there is an almost split
sequence in $(C^m_n(\ca^\op),(\ce^\op)^m_n)$ ending in $Z$. Since
$(C^m_n(\ca),\ce^m_n) \simeq (C^m_n(\ca^\op), \linebreak
(\ce^\op)^m_n)^\op$, $(C^m_n(\ca),\ce^m_n)$ has left almost split
morphisms and if $X \in C^m_n(\ca)$ is indecomposable and
non-$\ce^m_n$-injective then there is an almost split sequence in
$(C^m_n(\ca),\ce^m_n)$ starting in $X$. Therefore,
$(C^m_n(\ca),\ce^m_n)$ has almost split sequences.
\end{proof}

\subsection{Categories of $n$-cyclic complexes}

Let $\ca$ be an additive category, $n \in \mathbb{N}$ and
$\mathbb{Z}_n = \{0,1, \cdots , n-1\}$ the additive cyclic group of
order $n$. An {\it $n$-cyclic complex} $X$ on $\ca$ is a collection
$(X^i,d^i_X)_{i \in \mathbb{Z}_n}$ with $X^i \in \ca$ and $d^i_X \in
\ca(X^i,X^{i+1})$ such that $d^{i+1}_Xd^i_X = 0$ for all $i \in
\mathbb{Z}_n$ (Ref. \cite[\S 7, Appendix]{PenXia97}). An $n$-cyclic
complex $X = (X^i,d^i_X)_{i \in \mathbb{Z}_n}$ on $\ca$ can be
visualized as the following diagram:
$$\xymatrix{ & & X^1 \ar[rrd]^-{d^1_X} & & \\
X^0 \ar[urr]^-{d^0_X} & & & & X^2 \ar@{.}[dl] \\
& X^{n-1} \ar[ul]_-{d^{n-1}_X} & & X^{n-2} \ar[ll]_-{d^{n-2}_X} &
}$$ A {\it morphism} $f$ from an $n$-cyclic complex $X =
(X^i,d^i_X)_{i \in \mathbb{Z}_n}$ on $\ca$ to an $n$-cyclic complex
$Y = (Y^i,d^i_Y)_{i \in \mathbb{Z}_n}$ on $\ca$ is a collection
$(f^i)_{i \in \mathbb{Z}_n}$ with $f^i \in \ca(X^i,Y^i)$ such that
$f^{i+1}d^i_X=d^i_Yf^i$ for all $i \in \mathbb{Z}_n$, i.e., we have
the following commutative diagram:
$$\xymatrix{ & & X^1 \ar[rrd]^-{d^1_X} \ar[ddd]^(.25){f^1} & & \\
X^0 \ar[urr]^-{d^0_X} \ar[ddd]^(.25){f^0} & & & &
X^2 \ar@{.}[dl] \ar[ddd]^(.25){f^2}\\
& X^{n-1} \ar[ul]_-{d^{n-1}_X} \ar[ddd]^(.25){f^{n-1}} & &
X^{n-2} \ar[ll]_(.25){d^{n-2}_X} \ar[ddd]^(.25){f^{n-2}} & \\
& & Y^1 \ar[rrd]^(.25){d^1_Y} & & \\
Y^0 \ar[urr]^(.25){d^0_Y} & & & & Y^2 \ar@{.}[dl] \\
& Y^{n-1} \ar[ul]_-{d^{n-1}_Y} & & Y^{n-2} \ar[ll]_-{d^{n-2}_Y} &
}$$ The {\it composition} of morphisms $f=(f^i)_{i \in \mathbb{Z}_n}
: X \rightarrow Y$ and $g=(g^i)_{i \in \mathbb{Z}_n} : Y \rightarrow
Z$ is $gf := (g^if^i)_{i \in \mathbb{Z}_n} : X \rightarrow Z$. All
$n$-cyclic complexes on $\ca$ and all morphisms between them form
the {\it category of $n$-cyclic complexes on $\ca$}, denoted by
$C_{\mathbb{Z}_n}(\ca)$.

Let $(\ca , \ce)$ be an exact category and $\ce_{\mathbb{Z}_n}$ the
class of composable morphisms $\xymatrix{ X \ar[r]^-{f} & Y
\ar[r]^-{g} & Z }$ such that $\xymatrix{ X^i \ar[r]^-{f^i} & Y^i
\ar[r]^-{g^i} & Z^i }$ is a conflation in $\ce$ for all $i \in
\mathbb{Z}_n$. Then $(C_{\mathbb{Z}_n}(\ca),\ce_{\mathbb{Z}_n})$ is
an exact category.

\begin{corollary} Let $\ca$ be a dualizing $k$-category,
$\cf$ the natural exact structure on the abelian category $\mod\ca$,
and $n \in \mathbb{N}$. Then
$(C_{\mathbb{Z}_n}(\mod\ca),\cf_{\mathbb{Z}_n})$ has almost split
sequences.
\end{corollary}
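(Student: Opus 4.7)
The plan is to mimic the proofs of Corollary~\ref{Corollary-C^b_n-ass} and of its $[1,m]$-amplitude analogue, replacing the infinite linear $\mathbb{Z}$-quiver or the $A_m$-quiver by a cyclic quiver of length $n$. Concretely, I would let $Q$ be the quiver with vertex set $Q_0=\mathbb{Z}_n$ and exactly one arrow $a_i:i\to i+1$ for each $i\in\mathbb{Z}_n$ (indices read modulo $n$), let $I$ be the ideal of $kQ$ generated by the $n$ length-two paths $a_{i+1}a_i$, and set $\cb:=kQ/I$.

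Before applying Theorem~\ref{Theorem-dualizing} I would verify its hypotheses on the pair $(Q,I)$. The quiver $Q$ is locally finite, since each vertex has exactly one incoming and one outgoing arrow. The ideal $I$ is clearly contained in $k^2Q$, and every path of length $\ge 2$ in $Q$ contains some $a_{i+1}a_i$ as a subpath, so the constant bound $l_v=2$ shows that $I$ is admissible; by construction $I$ is generated by a set of paths.

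The core of the argument is to identify $n$-cyclic complexes with representations of $Q_I$. Unwinding the definitions, an object of $\Rep(Q_I,\mod\ca)$ is exactly a family $(X^i,d^i_X)_{i\in\mathbb{Z}_n}$ with $X^i\in\mod\ca$ and $d^i_X=X(a_i):X^i\to X^{i+1}$ satisfying $d^{i+1}_Xd^i_X=0$, with morphisms matching up componentwise; since $Q_0$ is finite, every such representation is automatically support-finite. Hence $C_{\mathbb{Z}_n}(\mod\ca)\simeq\rep(Q_I,\mod\ca)\simeq\rep(\cb,\mod\ca)$. By Proposition~\ref{Proposition-rep-mod} this is equivalent to $\mod(\cb^\op\otimes_k\ca)$, and Theorem~\ref{Theorem-dualizing}(3) then delivers the existence of almost split sequences.

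I do not anticipate a serious obstacle: the argument is essentially a translation through the equivalences already established. The only subtle point is that the exact structure $\cf_{\mathbb{Z}_n}$ on $C_{\mathbb{Z}_n}(\mod\ca)$ must correspond, under the chain of equivalences, to the natural abelian exact structure on $\mod(\cb^\op\otimes_k\ca)$. This follows from the remark after Proposition~\ref{Proposition-rep-mod}, because a sequence in $\Rep(Q_I,\mod\ca)$ is exact if and only if it is vertex-wise exact in $\mod\ca$, which is precisely the defining condition for $\cf_{\mathbb{Z}_n}$.
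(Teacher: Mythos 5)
Your proposal is correct and follows essentially the same route as the paper: take the cyclic quiver on $\mathbb{Z}_n$ with $I$ generated by the length-two paths, identify $C_{\mathbb{Z}_n}(\mod\ca)$ with $\rep(kQ/I,\mod\ca)\simeq\mod(\cb^\op\otimes_k\ca)$, and invoke Proposition~\ref{Proposition-rep-mod} together with Theorem~\ref{Theorem-dualizing}(3). The extra checks you include (local finiteness, admissibility of $I$, and matching of exact structures) are sound and merely make explicit what the paper leaves implicit.
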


\begin{proof} Let $Q$ be the quiver with vertices $i$ and arrows
$a_i : i \rightarrow i+1$ for all $i \in \mathbb{Z}_n  = \{0, 1,2,
\cdots , n-1 \}$, i.e.,
$$\xymatrix{ & & 1 \ar[rrd]^-{a_1} & & \\ 0 \ar[urr]^-{a_0} & & & & 2 \ar@{.}[dl] \\
& n-1 \ar[ul]_-{a_{n-1}} & & n-2 \ar[ll]_-{a_{n-2}} & }$$ $I$ the
ideal of $kQ$ generated by all paths of length $2$, and $\cb :=
kQ/I$. By Proposition~\ref{Proposition-rep-mod} and
Theorem~\ref{Theorem-dualizing}, we know $C_{\mathbb{Z}_n}(\mod\ca)
\simeq \rep(\cb,\mod\ca) \simeq \mod(\cb^\op \otimes_k \ca)$ has
almost split sequences.
\end{proof}

For any $M \in \mod\ca$ and $j \in \mathbb{Z}_n$, denote by $J_j(M)$
the $n$-cyclic complex $X=(X^i,d^i_X)_{i \in \mathbb{Z}_n}$: if $n
\geq 2$ then $X^i := M$ for $i=j,j+1$ and $X^i := 0$ otherwise, and
$d^i_X := 1_M$ for $i=j$ and $d^i_X := 0$ otherwise; if $n=1$ then
$X^0 := M \oplus M$ and $d^0_X := \left(\begin{array}{cc} 0 & 0 \\
1_M & 0 \end{array}\right)$. We say an $n$-cyclic complex
$X=(X^i,d^i_X)_{i \in \mathbb{Z}_n}$ is {\it stalk} if $X^i \neq 0$
for at most one $i \in \mathbb{Z}_n$ and $d^i_X = 0$ for all $i \in
\mathbb{Z}_n$. Denote by $C^b_{\mathbb{Z}_n}(\ca)$ the smallest full
subcategory of $C_{\mathbb{Z}_n}(\ca)$ containing all stalk
$n$-cyclic complexes and closed under finite extensions, and
$\ce^b_{\mathbb{Z}_n} :=
\ce_{\mathbb{Z}_n}|C^b_{\mathbb{Z}_n}(\ca)$. Then
$(C^b_{\mathbb{Z}_n}(\ca),\ce^b_{\mathbb{Z}_n})$ is a full exact
subcategory of $(C_{\mathbb{Z}_n}(\ca),\ce_{\mathbb{Z}_n})$.

\begin{corollary} \label{Corollary-C^b_{Z_n}-ass}
Let $\ca$ be a dualizing $k$-category with trivial exact structure
$\ce$ and $\gl(\mod\ca) < \infty$, and $n \in \mathbb{N}$. Then the
exact category $(C^b_{\mathbb{Z}_n}(\ca),\ce^b_{\mathbb{Z}_n})$ has
almost split sequences.
\end{corollary}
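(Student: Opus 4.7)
The plan is to mirror the proof of the analogous $C^b_n$ corollary above, adapted to the cyclic indexing. Since $\mod\ca$ is abelian, the canonical kernel-image sub-complex construction shows that every cyclic complex in $\mod\ca$ is a finite extension of stalks, so $C^b_{\mathbb{Z}_n}(\mod\ca) = C_{\mathbb{Z}_n}(\mod\ca)$ and the preceding corollary furnishes almost split sequences in the ambient exact category $(C^b_{\mathbb{Z}_n}(\mod\ca),\cf^b_{\mathbb{Z}_n})$. Via $(\ca,\ce) \simeq (\proj\ca,\cf|\proj\ca)$ the target exact category is equivalent to $(C^b_{\mathbb{Z}_n}(\proj\ca),(\cf|\proj\ca)^b_{\mathbb{Z}_n})$, so by the Auslander--Smal\o\ theorem quoted in Section 2.2 it suffices to show that $C^b_{\mathbb{Z}_n}(\proj\ca)$ is a functorially finite subcategory of $C^b_{\mathbb{Z}_n}(\mod\ca)$ closed under $\cf^b_{\mathbb{Z}_n}$-extensions and direct summands.

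The heart of the argument is contravariant finiteness. For $Z \in C^b_{\mathbb{Z}_n}(\mod\ca)$ I would construct a right approximation $g : Y \to Z$ with $Y \in C^b_{\mathbb{Z}_n}(\proj\ca)$ by assembling three ingredients in parallel to the $C^b_n$ proof. First, the cyclic analogue of the Iyama--Kato--Miyachi Formula (17) yields a natural epimorphism $p : \oplus_{j \in \mathbb{Z}_n} J_j(Z^j) \twoheadrightarrow Z$ whose $J_j(Z^j)$-summand contributes $1_{Z^j}$ at position $j$ and $d^j_Z$ at position $j+1$. Second, epimorphisms $P_j \twoheadrightarrow Z^j$ in $\mod\ca$ with $P_j \in \proj\ca$ assemble into $p' : \oplus_j J_j(P_j) \twoheadrightarrow \oplus_j J_j(Z^j)$. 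Third, the hypothesis $\gl(\mod\ca) < \infty$ provides a homotopically projective resolution $q : Q \to Z$ with $Q \in C^b_{\mathbb{Z}_n}(\proj\ca)$. Then $g := (q, pp') : Y := Q \oplus \oplus_j J_j(P_j) \to Z$ is the desired approximation: given $g' : Z' \to Z$ with $Z' \in C^b_{\mathbb{Z}_n}(\proj\ca)$, homotopical projectivity of $Z'$ produces $h : Z' \to Q$ with $qh = g'$ in the cyclic derived category, and hence in the bounded homotopy category, so that $l := g' - qh$ is a null-homotopy; by the same cocycle argument used in the $C^b_n$ proof, $l$ factors through $p$, and then through $p'$ by projectivity of the components of $Z'$, yielding the required factorization $g' = gg''$.

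Closure of $C^b_{\mathbb{Z}_n}(\proj\ca)$ under $\cf^b_{\mathbb{Z}_n}$-extensions and direct summands follows componentwise from the corresponding properties of $\proj\ca$ in $\mod\ca$. This gives right almost split morphisms in $(C^b_{\mathbb{Z}_n}(\ca),\ce^b_{\mathbb{Z}_n})$ and almost split sequences ending in non-$\ce^b_{\mathbb{Z}_n}$-projective indecomposables. Applying the same argument to the dualizing $k$-variety $\ca^\op$ (which again satisfies $\gl(\mod\ca^\op) = \gl(\mod\ca) < \infty$) and invoking the opposite identification $(C^b_{\mathbb{Z}_n}(\ca),\ce^b_{\mathbb{Z}_n}) \simeq (C^b_{\mathbb{Z}_n}(\ca^\op),(\ce^\op)^b_{\mathbb{Z}_n})^\op$ supplies left almost split morphisms and almost split sequences starting in non-$\ce^b_{\mathbb{Z}_n}$-injective indecomposables, completing the argument.

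The main obstacle is producing the homotopically projective resolution in the cyclic setting. The Iyama--Kato--Miyachi Proposition 41, invoked in the $C^b_n$ proof, is formulated for $\mathbb{Z}$-indexed $n$-complexes, and an $\mathbb{Z}_n$-indexed counterpart must be supplied: every $Z \in C^b_{\mathbb{Z}_n}(\mod\ca)$ should admit a quasi-isomorphism $q : Q \to Z$ from a $Q \in C^b_{\mathbb{Z}_n}(\proj\ca)$ that is homotopically projective in $C_{\mathbb{Z}_n}(\mod\ca)$. This ought to be achievable by an iterative Comparison-Theorem-style construction that terminates thanks to $\gl(\mod\ca)<\infty$, but the wrap-around of $\mathbb{Z}_n$-indexing (with no ``truncation direction'' to resolve toward) prevents a direct transfer of the linear proof and requires careful bookkeeping to ensure the resolution actually lives in $C^b_{\mathbb{Z}_n}(\proj\ca)$.
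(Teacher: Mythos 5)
Your proposal mirrors the paper's strategy closely, but it leaves the central technical step --- the existence of a homotopically projective resolution $q : Q \to Z$ with $Q \in C^b_{\mathbb{Z}_n}(\proj\ca)$ --- unfilled, and you yourself flag this as ``the main obstacle.'' That is the genuine gap: an iterative comparison-theorem construction has no direction to terminate toward once the indexing is $\mathbb{Z}_n$-cyclic, exactly as you observe, so the analogue of Iyama--Kato--Miyachi Proposition~41 cannot simply be asserted and your factorization $g' = qh + l = gg''$ rests on a resolution you have not produced. You also silently use that $Z' \in C^b_{\mathbb{Z}_n}(\proj\ca)$ is homotopically projective in the cyclic derived category, which is a second nontrivial fact requiring the same kind of justification.

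The paper's actual mechanism for closing this gap is quite different from a comparison-theorem recursion. It unfolds the $n$-cyclic complex $Z$ into a $\mathbb{Z}$-indexed (periodic) complex $\tilde Z$, takes a Cartan--Eilenberg resolution $P_{**}$ of $\tilde Z$ in the classical sense (Weibel, Lemma~5.7.2), and uses the hypothesis $\gl(\mod\ca) < \infty$ to choose the column-wise projective resolutions of the cohomologies, coboundaries, cocycles and components of $\tilde Z$ of uniformly bounded finite length, so that the total complex $\tilde Q := \Tot^{\oplus}(P_{**})$ lies in $C_2(\proj\ca)$. It then refolds $\tilde Q$ into an $n$-cyclic complex $Q$ by reading its components modulo $n$; the periodicity of $\tilde Z$ makes this folding well defined. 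That the resulting $Q$ lies in $C^b_{\mathbb{Z}_n}(\proj\ca)$, that the induced $q : Q \to Z$ is a quasi-isomorphism, and that objects of $C^b_{\mathbb{Z}_n}(\proj\ca)$ are homotopically projective are then supplied by Zhao (Propositions~2.4 and~2.5) and Stai (Lemma~3.5). Without this unfold/Cartan--Eilenberg/refold input, or some substitute for it, the key step of your right-approximation argument is unjustified, so the proposal as written is incomplete. (Your preliminary observation that $C^b_{\mathbb{Z}_n}(\mod\ca) = C_{\mathbb{Z}_n}(\mod\ca)$, via the kernel filtration into two zero-differential cyclic complexes, is correct but tangential: the paper simply proves contravariant finiteness of $C^b_{\mathbb{Z}_n}(\proj\ca)$ inside $C_{\mathbb{Z}_n}(\mod\ca)$ directly.)
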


\begin{proof} First of all, we show that
$(C^b_{\mathbb{Z}_n}(\ca),\ce^b_{\mathbb{Z}_n})$ has right almost
split morphisms. We have known $(\ca,\ce) \simeq (\proj\ca,\cf |
\proj\ca)$. Thus $(C^b_{\mathbb{Z}_n}(\ca),\ce^b_{\mathbb{Z}_n})
\simeq (C^b_{\mathbb{Z}_n}(\proj\ca),(\cf |
\proj\ca)^b_{\mathbb{Z}_n})$. So it is enough to prove that the
exact category $(C^b_{\mathbb{Z}_n}(\proj\ca),(\cf |
\proj\ca)^b_{\mathbb{Z}_n}) =
(C^b_{\mathbb{Z}_n}(\proj\ca),\cf_{\mathbb{Z}_n} |
C^b_{\mathbb{Z}_n}(\proj\ca))$ has right almost split morphisms. For
this, it suffices to show that $C^b_{\mathbb{Z}_n}(\proj\ca)$ is a
contravariantly finite subcategory of $C_{\mathbb{Z}_n}(\mod\ca)$
closed under $\cf_{\mathbb{Z}_n}$-extensions and direct summands.
Clearly, we need only to prove that $C^b_{\mathbb{Z}_n}(\proj\ca)$
is contravariantly finite over $C_{\mathbb{Z}_n}(\mod\ca)$, i.e.,
for any $Z \in C_{\mathbb{Z}_n}(\mod\ca)$, there exists a right
$C^b_{\mathbb{Z}_n}(\proj\ca)$-approximation $g : Y \rightarrow Z$.

By \cite[Page 53, Proof of Proposition 7.1]{PenXia97}, there is an
epimorphism $p : \oplus^{n-1}_{j=0}J_j(Z^j) \rightarrow Z$ in
$C_{\mathbb{Z}_n}(\mod\ca)$. Since $\mod\ca$ has enough projective
objects, for any $j \in \mathbb{Z}_n$, we have an epimorphism $P_j
\twoheadrightarrow Z^j$ in $\mod\ca$ with $P^j \in \proj\ca$. Thus
there is an epimorphism $p' : \oplus^{n-1}_{j=0}J_j(P_j) \rightarrow
\oplus^{n-1}_{j=0}J_j(Z^j)$ in $C_{\mathbb{Z}_n}(\mod\ca)$. So we
obtain a morphism
$$\xymatrix{ r=pp' :
\oplus^{n-1}_{j=0}J_j(P_j) \ar@{->>}[r] & \oplus^{n-1}_{j=0}J_j(Z^j)
\ar@{->>}[r] & Z }$$ in $C_{\mathbb{Z}_n}(\mod\ca)$.

From $n$-cyclic complex $Z \in C_{\mathbb{Z}_n}(\mod\ca)$, we can
construct a complex $\tilde{Z}=(\tilde{Z}^i,d^i_{\tilde{Z}})_{i \in
\mathbb{Z}}$ where $\tilde{Z}^i := Z^i$ and $d^i_{\tilde{Z}} :=
d^i_Z$ for all $i \in \mathbb{Z}$. By \cite[Lemma 5.7.2]{Wei94},
$\tilde{Z}$ admits a Cartan-Eilenberg resolution $P_{**}$ whose
total complex Tot$^{\oplus}(P_{**})$, denoted by $\tilde{Q}$, is
quasi-isomorphic to $\tilde{Z}$ in $C_2(\Mod\ca)$. Let $\tilde{q} :
\tilde{Q} \rightarrow \tilde{Z}$ be such a quasi-isomorphism. Due to
$\gl(\mod\ca) < \infty$, we can choose projective resolutions of all
cohomologies and coboundaries, and thus cocycles and components, of
$\tilde{Z}$ during the construction of $P_{**}$ to be of finite
length. Namely, we can assume that each component of $\tilde{Q}$ is
finitely generated projective, i.e., $\tilde{Q} \in C_2(\proj\ca)$.
From $\tilde{Q}$, we can construct an $n$-cyclic complex $Q =
(Q^i,d^i_Q)_{i \in \mathbb{Z}_n}$ where $Q^i := \tilde{Q}^i$ and
$d^i_Q := d^i_{\tilde{Q}}$ for all $i \in \mathbb{Z}_n$. Then $Q \in
C^b_{\mathbb{Z}_n}(\proj\ca)$ and there is a quasi-isomorphism $q :
Q \rightarrow Z$ (cf. \cite[Proposition 2.5]{Zha14} and \cite[Lemma
3.5]{Sta16}). Now we check $g := (q,r) : Y := Q \oplus
(\oplus^{n-1}_{j=0}J_j(P_j)) \rightarrow Z$ is a right
$C^b_{\mathbb{Z}_n}(\proj\ca)$-approximation of $Z$. In another
words, for any morphism $g' : Z' \rightarrow Z$ in
$C_{\mathbb{Z}_n}(\mod\ca)$ with $Z' \in
C^b_{\mathbb{Z}_n}(\proj\ca)$, we need to show that there is a
morphism $g'' : Z' \rightarrow Y$ in $C^b_{\mathbb{Z}_n}(\proj\ca)$
such that $g'=gg''$.

Since $q$ is a quasi-isomorphism, we have a morphism $q^{-1}g' : Z'
\rightarrow Q$ in $D_{\mathbb{Z}_n}(\mod\ca)$. Since $Z' \in
C^b_{\mathbb{Z}_n}(\proj\ca)$ is homotopically projective (Ref.
\cite[Proposition 2.4]{Zha14}), there is a morphism $h : Z'
\rightarrow Q$ in $C^b_{\mathbb{Z}_n}(\proj\ca)$ such that $h =
q^{-1}g'$ in the derived category $D_{\mathbb{Z}_n}(\mod\ca)$.
$$\xymatrix{ & Z' \ar[d]^-{g'} \ar[dl]_-h \\ Q \ar[r]^-q & Z }$$
So $g'=qh$ in $D_{\mathbb{Z}_n}(\mod\ca)$, and further in the
homotopy category $K_{\mathbb{Z}_n}(\mod\ca)$ due to $Z' \in
C^b_{\mathbb{Z}_n}(\proj\ca)$. Hence $g'=qh+l$ in
$C_{\mathbb{Z}_n}(\mod\ca)$ for some null-homotopy $l : Z'
\rightarrow Z$ in $C_{\mathbb{Z}_n}(\mod\ca)$. Since $l$ is a
null-homotopy, $l$ is factored through $p$ (Ref. \cite[Page 53,
Proof of Proposition 7.1]{PenXia97}), say $l=pl'$. Since $Z' \in
C^b_{\mathbb{Z}_n}(\proj\ca)$, each component of $Z'$ is projective.
Thus $l'$ is factored through $p'$, say $l'=p'l''$. Hence $l$ is
factored through $r$.
$$\xymatrix{ & & Z' \ar[dll]_-{l''} \ar[dl]^-{l'} \ar[d]^-l \\
\oplus^{n-1}_{j=0}J_j(P_j) \ar@{->>}[r]_-{p'} &
\oplus^{n-1}_{j=0}J_j(Z^j) \ar@{->>}[r]_-p & Z }$$ Furthermore, $$g'
= qh+l = qh+rl'' = (q,r) \left(\begin{array}{c} h \\ l''
\end{array}\right) = gg''$$ where $g'' := \left(\begin{array}{c} h
\\ l''
\end{array}\right) : Z' \rightarrow Y = Q \oplus (\oplus^{n-1}_{j=0}J_j(P_j))$.
So $g : Y \rightarrow Z$ is a right
$C^b_{\mathbb{Z}_n}(\proj\ca)$-approximation of $Z$. Therefore,
$C^b_{\mathbb{Z}_n}(\proj\ca)$ is contravariantly finite over
$C_{\mathbb{Z}_n}(\mod\ca)$.

Since $C^b_{\mathbb{Z}_n}(\ca)$ is Krull-Schmidt and
$(C^b_{\mathbb{Z}_n}(\ca),\ce^b_{\mathbb{Z}_n})$ has right almost
split morphisms, $(C^b_{\mathbb{Z}_n}(\ca),\ce^b_{\mathbb{Z}_n})$
has minimal right almost split morphisms. Furthermore, one can prove
that if $Z \in C^b_{\mathbb{Z}_n}(\ca)$ is indecomposable and
non-$\ce^b_{\mathbb{Z}_n}$-projective then there is an almost split
sequence in $(C^b_{\mathbb{Z}_n}(\ca),\ce^b_{\mathbb{Z}_n})$ ending
in $Z$.

Applying the obtained result to dualizing $k$-variety $\ca^\op$, we
know that $(C^b_{\mathbb{Z}_n}(\ca^\op),(\ce^\op)^b_{\mathbb{Z}_n})$
has right almost split morphisms and if $Z \in
C^b_{\mathbb{Z}_n}(\ca^\op)$ is indecomposable and
non-$(\ce^\op)^b_{\mathbb{Z}_n}$-projective then there is an almost
split sequence in
$(C^b_{\mathbb{Z}_n}(\ca^\op),(\ce^\op)^b_{\mathbb{Z}_n})$ ending in
$Z$. Since $(C^b_{\mathbb{Z}_n}(\ca),\ce^b_{\mathbb{Z}_n}) \simeq
(C^b_{\mathbb{Z}_n}(\ca^\op), \linebreak
(\ce^\op)^b_{\mathbb{Z}_n})^\op$,
$(C^b_{\mathbb{Z}_n}(\ca),\ce^b_{\mathbb{Z}_n})$ has left almost
split morphisms and if $X \in C^b_{\mathbb{Z}_n}(\ca)$ is
indecomposable and non-$\ce^b_{\mathbb{Z}_n}$-injective then there
is an almost split sequence in
$(C^b_{\mathbb{Z}_n}(\ca),\ce^b_{\mathbb{Z}_n})$ starting in $X$.
Therefore, $(C^b_{\mathbb{Z}_n}(\ca),\ce^b_{\mathbb{Z}_n})$ has
almost split sequences.
\end{proof}

\begin{corollary} Let $\ca$ be a dualizing $k$-category with
trivial exact structure $\ce$ and $\gl(\mod\ca) \leq 1$, and $n \in
\mathbb{N}$. Then the exact category
$(C_{\mathbb{Z}_n}(\ca),\ce_{\mathbb{Z}_n})$ has almost split
sequences.
\end{corollary}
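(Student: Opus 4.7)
The plan is to follow the scheme of Corollary~\ref{Corollary-C^b_{Z_n}-ass}, adapted from the bounded subcategory $C^b_{\mathbb{Z}_n}$ to the whole $C_{\mathbb{Z}_n}$, using the stronger assumption $\gl(\mod\ca)\leq 1$ at the only place where finite projective dimension is essential. First I would reduce via the equivalence $(\ca,\ce)\simeq(\proj\ca,\cf|\proj\ca)$ to proving that $(C_{\mathbb{Z}_n}(\proj\ca),\cf_{\mathbb{Z}_n}|C_{\mathbb{Z}_n}(\proj\ca))$ has almost split sequences. The previous corollary on $n$-cyclic complexes in $\mod\ca$ shows that $(C_{\mathbb{Z}_n}(\mod\ca),\cf_{\mathbb{Z}_n})$ is a Krull-Schmidt abelian dualizing $k$-category with almost split sequences, so by the Auslander--Smal{\o} theorem recalled in the preliminaries it suffices to check that $C_{\mathbb{Z}_n}(\proj\ca)$ is a functorially finite subcategory of $C_{\mathbb{Z}_n}(\mod\ca)$ closed under $\cf_{\mathbb{Z}_n}$-conflations and direct summands. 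Closure under summands is immediate, and closure under componentwise-split extensions of projectives is standard.

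The core step is contravariant finiteness: for each $Z\in C_{\mathbb{Z}_n}(\mod\ca)$, I would construct a right $C_{\mathbb{Z}_n}(\proj\ca)$-approximation $g:Y\to Z$. Exactly as in the proof of Corollary~\ref{Corollary-C^b_{Z_n}-ass}, start with the canonical epimorphism $p:\oplus^{n-1}_{j=0}J_j(Z^j)\twoheadrightarrow Z$ from \cite{PenXia97}, and lift each $Z^j$ through a projective cover $P_j\twoheadrightarrow Z^j$ in $\mod\ca$ to obtain $r=pp':\oplus^{n-1}_{j=0}J_j(P_j)\to Z$. The essential new input is a quasi-isomorphism $q:Q\to Z$ with $Q\in C_{\mathbb{Z}_n}(\proj\ca)$: since $\gl(\mod\ca)\leq 1$, each $Z^i$ has a two-term projective resolution $0\to P^i_1\to P^i_0\to Z^i\to 0$, and a standard Cartan--Eilenberg-style splice (as used in \cite{Zha14, Sta16}) assembles these into an $n$-cyclic complex $Q$ in $C_{\mathbb{Z}_n}(\proj\ca)$ together with a quasi-isomorphism $q:Q\to Z$. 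Setting $Y:=Q\oplus(\oplus^{n-1}_{j=0}J_j(P_j))$ and $g:=(q,r)$, the verification that $g$ is a right approximation follows the previous proof verbatim: any $g':Z'\to Z$ with $Z'\in C_{\mathbb{Z}_n}(\proj\ca)$ satisfies $g'=qh+l$ in $C_{\mathbb{Z}_n}(\mod\ca)$ with $h$ lifting $q^{-1}g'$ via homotopical projectivity of $Z'$ and $l$ a null-homotopy; $l$ then factors through $p$, hence through $r$ by projectivity of the components of $Z'$, yielding the required factorization $g'=gg''$.

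For the left side, I would apply the contravariant-finiteness argument to the dualizing $k$-category $\ca^\op$, using that $\gl(\mod\ca^\op)=\gl(\mod\ca)\leq 1$, and invoke $(C_{\mathbb{Z}_n}(\ca),\ce_{\mathbb{Z}_n})\simeq(C_{\mathbb{Z}_n}(\ca^\op),(\ce^\op)_{\mathbb{Z}_n})^\op$ to transfer right almost split morphisms to left almost split morphisms. Combining the two directions as in Corollary~\ref{Corollary-C^b_{Z_n}-ass} produces almost split sequences ending in every indecomposable non-$\ce_{\mathbb{Z}_n}$-projective object and starting in every indecomposable non-$\ce_{\mathbb{Z}_n}$-injective object of $C_{\mathbb{Z}_n}(\ca)$, completing the proof. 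The principal technical obstacle is constructing $Q$ as an honest $n$-cyclic complex with projective components rather than merely a bounded one, which is exactly what the hereditary hypothesis $\gl(\mod\ca)\leq 1$ provides; higher global dimension would produce iterated syzygies that cannot be wrapped consistently into $\mathbb{Z}_n$-periodic projective data, so without this hypothesis the Cartan--Eilenberg approach of Corollary~\ref{Corollary-C^b_{Z_n}-ass} would not terminate inside $C_{\mathbb{Z}_n}(\proj\ca)$.
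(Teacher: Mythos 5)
The paper's proof is a two-line reduction: under $\gl(\mod\ca)\leq 1$ one shows (analogously to a lemma of Bridgeland) that $C^b_{\mathbb{Z}_n}(\ca)=C_{\mathbb{Z}_n}(\ca)$, and then the statement follows at once from Corollary~\ref{Corollary-C^b_{Z_n}-ass}, since $\gl\leq 1 <\infty$. You instead re-run the entire contravariant-finiteness argument of Corollary~\ref{Corollary-C^b_{Z_n}-ass} with $C^b_{\mathbb{Z}_n}(\proj\ca)$ replaced everywhere by $C_{\mathbb{Z}_n}(\proj\ca)$. That detour conceals a genuine gap: at the lifting step you invoke ``homotopical projectivity of $Z'$'' for an arbitrary $Z'\in C_{\mathbb{Z}_n}(\proj\ca)$, but in the $\mathbb{Z}_n$-periodic world this is \emph{not} automatic. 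Unlike bounded $\mathbb{Z}$-graded complexes of projectives, periodic complexes of projectives can fail to be homotopically projective (totally acyclic periodic complexes are the standard example). The cited reference for homotopical projectivity in the previous corollary, \cite[Proposition 2.4]{Zha14}, applies to $C^b_{\mathbb{Z}_n}(\proj\ca)$, not to all of $C_{\mathbb{Z}_n}(\proj\ca)$; the two coincide precisely because $\gl\leq 1$, and that coincidence is exactly the observation $C^b_{\mathbb{Z}_n}(\ca)=C_{\mathbb{Z}_n}(\ca)$ that you never state.

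Relatedly, your closing paragraph misattributes the role of the hereditary hypothesis. You claim $\gl\leq 1$ is needed to produce $Q\in C_{\mathbb{Z}_n}(\proj\ca)$; but the Cartan--Eilenberg construction in Corollary~\ref{Corollary-C^b_{Z_n}-ass} already yields such a $Q$ under the weaker hypothesis $\gl<\infty$. What $\gl\leq 1$ genuinely buys is the equality $C^b_{\mathbb{Z}_n}(\ca)=C_{\mathbb{Z}_n}(\ca)$, which simultaneously gives homotopical projectivity of all objects in $C_{\mathbb{Z}_n}(\proj\ca)$ and, more to the point, lets one quote the previous corollary verbatim without redoing any approximation argument. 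If you make that equality explicit, your proof collapses to the paper's; without it, the homotopical-projectivity claim is unsupported.
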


\begin{proof} Analogous to \cite[Lemma 4.2]{Bri13} (cf.
\cite[Proposition 9.7]{Gor13}), we can prove
$C^b_{\mathbb{Z}_n}(\ca) = C_{\mathbb{Z}_n}(\ca)$ due to
$\gl(\mod\ca) \leq 1$. Then this corollary follows from
Corollary~\ref{Corollary-C^b_{Z_n}-ass}.
\end{proof}

\noindent {\footnotesize {\bf ACKNOWLEDGEMENT.} The authors are
sponsored by Project 11571341 NSFC.}

\footnotesize

\end{document}